\definecolor{uuuuuu}{rgb}{0.27,0.27,0.27}
\definecolor{sqsqsq}{rgb}{0.1255,0.1255,0.1255}
\newtheorem{definition}{Definition} [section]
\newtheorem{theorem}[definition]{Theorem}
\newtheorem{lemma}[definition]{Lemma}
\newtheorem{claim}[definition]{Claim}
\newtheorem{observation}[definition]{Observation}
\pgfplotsset{compat=1.18}
\begin{document}
\title{\bf\Large Hypergraphs with irrational Tur\'{a}n density and many extremal configurations}

\date{\today}
\author[1]{Jianfeng Hou\thanks{Research was supported by National Natural Science Foundation of China (Grant No. 12071077). Email: jfhou@fzu.edu.cn}}
\author[2]{Heng Li\thanks{Email: heng.li@sdu.edu.cn}}
\author[2]{Guanghui Wang\thanks{Email: ghwang@sdu.edu.cn}}
\author[1]{Yixiao Zhang\thanks{Email: fzuzyx@gmail.com}}
\affil[1]{Center for Discrete Mathematics,
            Fuzhou University, Fujian, 350108, China}
\affil[2]{School of Mathematics, Shandong University, Shandong, 250100, China}
\maketitle
\begin{abstract}
Unlike graphs, determining   Tur\'{a}n densities  of hypergraphs is known to be notoriously hard in general. The essential  reason is that for many classical families of $r$-uniform  hypergraphs $\mathcal{F}$, there are perhaps many near-extremal  $\mathcal{M}_t$-free  configurations with very different structure. Such a phenomenon is called not stable, and  Liu and Mubayi gave a first not stable example.  Another perhaps reason is that little is known about the set consisting  of all possible Tur\'{a}n densities which has cardinality of the continuum. Let $t\ge 2$ be an integer. In this paper,   we construct a  finite family  $\mathcal{M}_t$ of 3-uniform hypergraphs such that the Tur\'{a}n density of $\mathcal{M}_t$ is  irrational,  and there are $t$ near-extremal $\mathcal{M}_t$-free configurations that are far from each other in edit-distance.  This is the first not stable example that has an irrational  Tur\'{a}n density. It also provides  a new phenomenon about feasible region functions. 
\end{abstract}
\section{Introduction}\label{SEC:Introduction}
For an integer  $r\ge 2$, an $r$-uniform hypergraph (henceforth $r$-graph) $\mathcal{H}$ is a collection of $r$-subsets of some finite set $V$. We identify  $\mathcal{H}$ with its edge set,
use  $v(\mathcal{H})$  to denote the size of $V$.
Given a family $\mathcal{F}$ of $r$-graphs we say $\mathcal{H}$ is $\mathcal{F}$-free
if it does not contain any member of $\mathcal{F}$ as a subgraph.
The {\em Tur\'{a}n number} $\mathrm{ex}(n,\mathcal{F})$ of $\mathcal{F}$ is the maximum
number of edges in an $\mathcal{F}$-free $r$-graph on $n$ vertices.
The {\em Tur\'{a}n density} $\pi(\mathcal{F} )$ of $\mathcal{F}$ is defined as
$\pi(\mathcal{F})=\lim_{n\to \infty}\mathrm{ex}(n,\mathcal{F})/ {n\choose r}$.
A family $\mathcal{F}$ is called \emph{nondegenerate} if $\pi(\mathcal{F}) >0$.

The study of $\mathrm{ex}(n,\mathcal{F})$ is perhaps the central topic in extremal combinatorics, and much is known  when $r=2$.
The well-known Tur\'{a}n's theorem states that for every integer $\ell \ge 2$ the Tur\'{a}n number $\mathrm{ex}(n,K_{\ell+1})$
is uniquely achieved by the balanced $\ell$-partite graph on $n$ vertices, which is called the Tur\'{a}n graph $T(n,\ell)$. For $r\ge 3$ determining $\pi(\mathcal{F})$ for a family $\mathcal{F}$ of $r$-graphs  is much more difficult and many basic problems are
wide open.  Even very simple forbidden hypergraphs turned out to be notoriously difficult. For example, the famous tetrahedron conjecture of Tur\'{a}n~\cite{TU41} from 1941 that $\pi(K_{4}^{3}) = 5/9$  is still open, where $K_{\ell}^r$ denotes the complete $r$-graph on $\ell$ vertices. The best upper bound  is $\pi(K_{4}^{3}) \le 0.561666$, which was obtained by Razborov  $\cite{RA10}$ using the Flag Algebra machinery.

For every integer $r\ge 2$, define
\begin{align*}
\Pi_{\mathrm{fin}}^{(r)} & := \left\{\pi(\mathcal{F}) \colon \text{$\mathcal{F}$ is a finite family of $r$-graphs} \right\}, \quad\text{and} \\
\Pi_{\infty}^{(r)} & := \left\{\pi(\mathcal{F}) \colon \text{$\mathcal{F}$ is a (possibly infinite) family of $r$-graphs} \right\}.
\end{align*}
The classical Erd\H{o}s--Stone--Simonovits theorem~\cite{ES66,ES46} shows that
\[
\Pi_{\mathrm{fin}}^{(2)}=\Pi_{\infty}^{(2)}=\left\{\frac{m-1}{m}: m=1,2,\ldots\right\}.
\]
Unfortunately, little is known about $\Pi_{\mathrm{fin}}^{(r)}$  and $\Pi_{\infty}^{(r)}$ for $r\ge 3$. This is perhaps a barrier in  determining Tur\'{a}n densities of hypergrahs.  Erd\H{o}s \cite{Erdos64} proved that $\Pi_{\infty}^{(r)} \cap (0, r!/r) =\emptyset.$ Brown and Simonovits \cite{BS84} proved that $\Pi_{\infty}^{(r)}$ lies in the closure of  $\Pi_{\mathrm{fin}}^{(r)}$. Chung and Graham \cite{FG20} conjectured that  $\Pi_{\mathrm{fin}}^{(r)}$ consists of rational numbers only. This  was disproved by Baber and Talbot \cite{RJ11} who discovered a family of only three forbidden 3-graphs whose Tur\'{a}n density is irrational. A breakthrough in this topic is given by  Pikhurko~\cite{PI14} who showed that $\Pi_{\infty}^{(r)}$ has cardinality of the continuum for every $r\ge 3$. Recently, Yan and Peng \cite{YP22} constructed a single 3-graph whose Tur\'{a}n density is  irrational. Moreover, some Tur\'{a}n densities of hyerpgraphs have an interesting property which is called non-jump.  Our results in this paper are not very related to this, so we omit the related definitions here. We refer the interested reader to e.g. \cite{Peng19,Peng07,Peng07II,Peng08,Peng08I,Peng23} for related results.

Given a nondegenerate family $\mathcal{F}$ of $r$-graphs, an  $\mathcal{F}$-free $r$-graph with $n$ vertices and $\mathrm{ex}(n,\mathcal{F})-o(n^r)$ edges is called a \emph{near-extremal configuration} of $\mathcal{F}$.  A fundamental barrier in   determining   $\pi(\mathcal{F})$ is the presence of many near-extremal configurations with very different structures.  A classical example is $K_{4}^{3}$.  Assuming that Tur\'{a}n's Tetrahedron conjecture is true, Kostochka~\cite{KO82}  showed that there are at least $2^{n-2}$ nonisomorphic near-extremal configurations on $3n$ vertices. Another example is the Erd\H{o}s--S\'{o}s conjecture on 3-graphs with bipartite links (e.g. see~\cite{FF84,LM22}). To study this new phenomenon, Mubayi~\cite{MU07} made the following definition.
\begin{definition}[$t$-stable]\label{DFN:t-stable}
Let $r \ge 2$ and $t \ge 1$ be integers.
A family $\mathcal{F}$ of $r$-graphs is $t$-stable if for every $m\in\mathbb{N}$
there exist $r$-graphs $\mathcal{G}_{1}(m),\ldots,\mathcal{G}_{t}(m)$ on $m$ vertices such that the following holds.
For every $\delta >0$ there exist $\epsilon > 0$ and $N_0$ such that for all $n \ge N_0$
if $\mathcal{H}$ is an $\mathcal{F}$-free $r$-graph on $n$ vertices with $|\mathcal{H}| > (1-\epsilon) \mathrm{ex}(n,\mathcal{F})$
then $\mathcal{H}$ can be transformed to some $\mathcal{G}_{i}(n)$ by adding and removing at most $\delta n^r$ edges. The stability number of $\mathcal{F}$, denote by $\xi(\mathcal{F})$, the minimum integer $t$ such that $\mathcal{F}$ is $t$-stable, and set $\xi(\mathcal{F}) = \infty$ if there is no such $t$.
Say $\mathcal{F}$ is stable if  $\xi(\mathcal{F}) =1$ .
\end{definition}

The classical Erd\H{o}s--Simonovits stability theorem~\cite{SI68} implies that every nondegenerate family of graphs is stable.
For hypergraphs there are many families (whose Tur\'{a}n densities are unknown) which are conjecturally not stable. Using Kostochka's constructions Liu and Mubayi~\cite{LM22} proved  that $\xi(K_{4}^3) = \infty$ (assuming Tur\'{a}n's conjecture).
In fact, families that are not stable and whose Tur\'{a}n densities can be determined were constructed only very recently.
In~\cite{LM22}, Liu and Mubayi constructed the first finite $2$-stable family of $3$-graphs.
Later in~\cite{LMR1}, together with Reiher, they further constructed the first finite $t$-stable family of $3$-graphs
for every integer $t\ge 3$. Hou, Li, Liu, Mubayi and Zhang \cite{hou2022hypergraphs}  constructed finite families $\mathcal{F}$ of 3-graphs with  $\xi(\mathcal{F})=\infty$, which answered two open problems posed by Liu,  Mubayi and Reiher~\cite{LMR1}. Very recently,  Liu and Pikhurko \cite{LP22} constructed a finite family  $\mathcal{F}$ of 3-graphs such that there are exponentially many near-extremal configurations and then  $\xi(\mathcal{F})=\infty$. We remark that all constructions mentioned above has rational Tur\'{a}n densities. Our main result in this work is to give a novel method to construct a finite family $\mathcal{F}_t$ such  ${\rm ex}(n, \mathcal{F}_t)$ is an irrational number and $\xi(\mathcal{F}_t) = t$ for every positive integer $t$.


Before state our main result, we need the following definitions.
An $r$-graph $\mathcal{H}$ is a \emph{blowup} of an $r$-graph $\mathcal{G}$ if there exists a
map $\psi\colon V(\mathcal{H}) \to V(\mathcal{G})$ so that $\psi(E) \in \mathcal{G}$ iff $E\in \mathcal{H}$.
We say $\mathcal{H}$ is \emph{$\mathcal{G}$-colorable}
if there exists a map $\phi\colon V(\mathcal{H}) \to V(\mathcal{G})$ so that $\phi(E)\in \mathcal{G}$ for all $E\in \mathcal{H}$,
and we call such a map $\phi$ a \emph{homomorphism} from $\mathcal{H}$ to $\mathcal{G}$.
In other words, $\mathcal{H}$ is $\mathcal{G}$-colorable if and only if $\mathcal{H}$ occurs as a subgraph in some blowup
of $\mathcal{G}$. The following is our main result.

\begin{theorem}\label{THM:main-sec1.1}
For every positive integer $t$ there exist  an irrational number $\lambda_t\in (0,1/6)$, $3$-graphs $\mathcal{G}_1, \ldots,\mathcal{G}_t$  with $v(\mathcal{G}_i)=3t+5$ for $i\in [t]$, and  a finite family $\mathcal{M}_t$ of $3$-graphs with the following properties.
\begin{enumerate}[label=(\alph*)]
  \item $\pi(\mathcal{M}_t)=6\lambda_t$.
  \item For every  $t\ge 1$ there exist constants $\epsilon = \epsilon(t)>0$
        and $N_0 = N_0(t)$ such that the following holds for every integer $n\ge N_0$.
        Every $n$-vertex $\mathcal{M}_t$-free $3$-graph with minimum degree at least $\left(3\lambda_t-\epsilon\right)n^2$
        is $\mathcal{G}_{i}$-colorable for some $i\in [t]$.    
  \item $\xi(\mathcal{M}_t)=t$. 
\end{enumerate}
\end{theorem}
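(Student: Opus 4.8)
The plan is to derive all three parts from a single structural statement, namely (b) itself. Fix $t\ge 1$. I would first write down the construction explicitly: take a common ``core'' $3$-graph on $5$ vertices whose weighting (Lagrangian) optimization is governed by a polynomial with an irrational algebraic root $\lambda_t\in(0,1/6)$ --- this is the mechanism behind the irrational Tur\'an densities of Baber--Talbot \cite{RJ11} and Yan--Peng \cite{YP22} --- together with $t$ mutually symmetric ``branch gadgets'', each occupying its own block of $3$ vertices, so that $\mathcal{G}_i$ is obtained by ``switching on'' the $i$-th branch while the remaining $t-1$ branches stay in a common neutral state; this forces $v(\mathcal{G}_i)=5+3t$. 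The branches are to be designed so that $\lambda(\mathcal{G}_1)=\dots=\lambda(\mathcal{G}_t)=\lambda_t$ and so that the optimally weighted blow-ups of distinct $\mathcal{G}_i$'s disagree on a positive fraction of triples. Finally, let $\mathcal{M}_t$ be the list of all $3$-graphs on at most some constant $C=C(t)$ vertices that are not $\mathcal{G}_i$-colorable for any $i\in[t]$ but all of whose proper induced subgraphs are; this list is finite because the $\mathcal{G}_i$ are fixed finite graphs, and each $\mathcal{G}_i$ is trivially $\mathcal{M}_t$-free.

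Granting (b), parts (a) and most of (c) are routine. The lower bound $\pi(\mathcal{M}_t)\ge 6\lambda_t$ holds because any blow-up of $\mathcal{G}_i$ is $\mathcal{M}_t$-free and the optimally weighted blow-up has $6\lambda(\mathcal{G}_i)\binom{n}{3}+o(n^3)=6\lambda_t\binom{n}{3}+o(n^3)$ edges. For the matching upper bound, a standard induction on the number of vertices works: if an $n$-vertex $\mathcal{M}_t$-free $\mathcal{H}$ has a vertex of below-average degree, delete it and apply the inductive bound; otherwise its minimum degree is at least $(3\lambda_t-\epsilon)n^2$, so (b) makes it $\mathcal{G}_i$-colorable, and a $\mathcal{G}_i$-colorable $3$-graph has at most $\lambda(\mathcal{G}_i)\,n^3=6\lambda_t\binom{n}{3}+o(n^3)$ edges since every one of its edges meets three distinct colour classes forming an edge of $\mathcal{G}_i$. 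Hence $\pi(\mathcal{M}_t)=6\lambda_t$, which is irrational.

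The heart of the matter --- and the step I expect to be the main obstacle --- is proving (b). I would follow the link-analysis method of Liu--Mubayi \cite{LM22} and Liu--Mubayi--Reiher \cite{LMR1}: the minimum-degree hypothesis forces every vertex link $\mathcal{H}_v$ to be dense, while $\mathcal{M}_t$-freeness forbids a prescribed finite list of configurations within single links and across pairs of links. By a supersaturation / local-to-global argument these local restrictions propagate to a partition $V(\mathcal{H})=V_1\cup\dots\cup V_k$ with $k\le 5+3t$ whose quotient pattern agrees with one of the $\mathcal{G}_i$, and one then checks directly that collapsing each $V_j$ to the corresponding vertex of $\mathcal{G}_i$ is a homomorphism. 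The delicate design problem is to make the branch gadgets rigid enough that the only surviving quotient patterns are exactly $\mathcal{G}_1,\dots,\mathcal{G}_t$, yet mild enough that $\mathcal{M}_t$ stays finite and the core's irrational Lagrangian $\lambda_t$ is preserved; this is precisely where the number $3t+5$ gets pinned down, and it is the longest part of the argument.

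For (c), the bound $\xi(\mathcal{M}_t)\le t$ follows from (b) plus a stability-of-blow-ups step: a $\mathcal{G}_i$-colorable $3$-graph with nearly the maximum number of edges is, after relocating $o(n)$ vertices, within $o(n^3)$ of the balanced optimally weighted blow-up $\mathcal{G}_i(n)$, so the graphs $\mathcal{G}_1(n),\dots,\mathcal{G}_t(n)$ witness $t$-stability. For $\xi(\mathcal{M}_t)\ge t$, I would use that the $\mathcal{G}_i(n)$ are pairwise at edit-distance at least $c\,n^3$ for some $c=c(t)>0$ --- built in by the requirement that switching on the $i$-th versus the $j$-th branch changes a positive fraction of triples --- and that this separation is robust, in the sense that nothing can be within $\tfrac{c}{3}n^3$ of two of them at once. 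If $\mathcal{M}_t$ were $(t-1)$-stable, each of the $t$ near-extremal configurations $\mathcal{G}_i(n)$ would have to be $o(n^3)$-close to one of the $t-1$ hypothetical witness graphs; by the pigeonhole principle two distinct $\mathcal{G}_i(n)$ would be close to a common graph, contradicting robustness. Therefore $\xi(\mathcal{M}_t)=t$.
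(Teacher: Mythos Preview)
Your outline identifies the right downstream logic (namely, that (b) drives (c) and that blow-ups of the $\mathcal{G}_i$ give the lower bound in (a)), but it defers precisely the two pieces that constitute the actual content of the theorem: the explicit construction of the $\mathcal{G}_i$, and the proof of (b). On both points your sketch diverges from the paper and, as written, does not give a proof.

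\textbf{The construction.} You describe a ``$5$-vertex core plus $t$ three-vertex branch gadgets'', with the irrationality imported from \cite{RJ11,YP22}. This is not the paper's construction and it is not clear it can be made to work with all three required features simultaneously (common Lagrangian, pairwise large edit distance, and the rigidity needed for (b)). In the paper the roles are reversed: the ``core'' is large and the variation is tiny. One takes the single $3$-graph $K_{3(t+1)}^{3-}$ on $3t+3$ vertices (the complete $3$-graph minus one edge), computes directly that $\lambda(K_{3(t+1)}^{3-})$ involves $\sqrt{9t^2+18t-3}$ and is therefore irrational, and then applies a new operation, the \emph{$(1,i)$-mix-crossed blowup} on a symmetric pair $\{1,2\}$, which adds two clones $1',2'$ and repartitions the edges through $\{1,1',2,2'\}$ in one of $t$ combinatorially distinct ways indexed by $i\in[t]$. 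The point of the operation (and the only nontrivial lemma about it) is that it preserves the Lagrangian exactly, so all $\mathcal{G}_i$ share $\lambda_t=\lambda(K_{3(t+1)}^{3-})$. The family $\mathcal{M}_t$ is then defined as $\bigcap_{i\in[t]}\mathcal{F}_{(3t+6)^2}(\mathcal{G}_n^i)$, the $3$-graphs on at most $(3t+6)^2$ vertices forbidden by every blow-up of every $\mathcal{G}_i$; in particular $\mathcal{K}_{3t+6}^3\subseteq\mathcal{M}_t$, which is used repeatedly.

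\textbf{Part (a).} You derive the upper bound in (a) from (b) by a minimum-degree induction. The paper does \emph{not} go through (b); it proves (a) directly and first, via Zykov symmetrization: $\mathcal{M}_t$ is blowup-invariant, so by the Liu--Mubayi--Reiher reduction it suffices to bound symmetrized $\mathcal{M}_t$-free $3$-graphs, and any such $\mathcal{H}$ is a blow-up of a $2$-covered $\mathcal{T}$ on at most $3t+5$ vertices (else $\mathcal{K}_{3t+6}^3$ appears), which must embed in some $\mathcal{G}_i$, whence $|\mathcal{H}|\le\lambda(\mathcal{G}_i)n^3=\lambda_t n^3$. Your route is plausible but introduces an unnecessary dependency and needs care with the error terms in the induction; the paper's route is cleaner and gives the exact extremal number, not just the density.

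\textbf{Part (b).} ``Link-analysis plus supersaturation'' is too vague to assess. The paper instead invokes the general framework of \cite{LMR2}: since $\mathcal{M}_t$ is blowup-invariant and (by the same argument as for (a)) symmetrized-stable with respect to the class of $\mathcal{G}_i$-colorable $3$-graphs, it suffices to prove \emph{vertex-extendability}. That is the technical heart: given $\mathcal{H}$ with large minimum degree and $\mathcal{H}-v$ already $\mathcal{G}_{1}$-colorable, one shows that the part sizes are forced to be near the Lagrangian optimum (a quantitative stability for $p_{K_{3(t+1)}^{3-}}$), that no edge through $v$ can hit a part twice (else one builds a member of $\mathcal{K}_{3t+6}^3$ via Lemma~\ref{LEMMA:greedily-embedding-Gi}), and finally that the link of $v$ fits the link pattern of some vertex of $\mathcal{G}_1$ by embedding an auxiliary $2$-covered subgraph on fewer than $(3t+6)^2$ vertices and using that it is not in $\mathcal{M}_t$. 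None of this structure is visible in your sketch.

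\textbf{Part (c).} Here your argument matches the paper's: $\xi(\mathcal{M}_t)\ge t$ because the optimal blow-ups of distinct $\mathcal{G}_i$ are $\Omega(n^3)$-far in edit distance, and $\xi(\mathcal{M}_t)\le t$ follows from (b) after deleting the $O(\varepsilon^{1/2}n)$ low-degree vertices.
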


This paper is organized as follows. In Section \ref{Preliminaries}, we describe notations and  terminologies, and determine the Lagrangian of the complete $3$-graph with one edge removed. We introduce a novel operation of $3$-graphs in Section \ref{Mix-crossed-blowup}, which plays a key role in our proof.  The construction of $\mathcal{M}_t$  and a proof of Theorem \ref{THM:main-sec1.1} (a) are included in Section \ref{Construction}. In Section \ref{stability}, we prove Theorem \ref{THM:main-sec1.1} (b) and (c).  The final section contains some concluding remarks.

\section{Preliminaries}\label{Preliminaries}

\subsection{Definitions and notations}
At first, we give some definitions and notations. For positive integers $n_1$ and $n_2$ with $n_1\leq n_2$, let $[n_1]\colon=\{1,\ldots, n_1\}$ and $[n_1, n_2]\colon =[n_2]\setminus [n_1-1]$. To improve readability, as it is standard in the literature, we will usually pretend that large numbers are integers to avoid using essentially irrelevant floor and ceiling symbols.

Let $\mathcal{H}$ be  an $r$-graph. The \emph{shadow} of $\mathcal{H}$ is defined as
\begin{align}
\partial\mathcal{H}
=
\left\{A\in \binom{V(\mathcal{H})}{r-1}\colon \text{there is } B\in \mathcal{H} \text{ such that }
	A\subseteq B\right\}. \notag
\end{align}
For a vertex $v\in V(\mathcal{H})$, the \emph{link} $L_{\mathcal{H}}(v)$ of $v$ in $\mathcal{H}$ is
\[
L_\mathcal{H}(v)=\left\{A \in \partial\mathcal{H}\colon A \cup \{v\}\in \mathcal{H}\right\}.
\]
The \emph{degree} of $v$ in $\mathcal{H}$ is $d_{\mathcal{H}}(v)=|L_\mathcal{H}(v)|$. Denote by $\delta(\mathcal{H})$ and $\Delta(\mathcal{H})$
the minimum degree and maximum degree of $\mathcal{H}$, respectively. The \emph{neighborhood} $N_\mathcal{H}(v)$ of $v$ is defined as
\[
N_{\mathcal{H}}(v)
= \left\{u\in V(\mathcal{H})\setminus\{v\}\colon
     \exists E\in\mathcal{H}\text{ such that }\{u,v\}\subset E\right\}.
\]

Let $\mathcal{H}$ be a $3$-graph. The \emph{neighborhood} of $\{u,v\}\subset V(\mathcal{H})$ in $\mathcal{H}$ is
$$N_{\mathcal{H}}(u,v)=\left\{w\in V(\mathcal{H})\colon \{u,v,w\}\in \mathcal{H}\right\}.$$
The {\em codegree} of  $\{u,v\}$ is $d_{\mathcal{H}}(u,v) = |N_{\mathcal{H}}(u,v)|$.
Denote by $\delta_2(\mathcal{H})$ and $\Delta_2(\mathcal{H})$
the minimum degree and maximum codegree of $\mathcal{H}$, respectively.
For a set $A \subseteq V(\mathcal{H})$ denote by $\mathcal{H}[A]$ the induced subgraph of $\mathcal{H}$ with $A$. 
A pair $\{v_1,v_2\}\subset V(\mathcal{H})$ is \emph{symmetric} in $\mathcal{H}$ if
$$L_{\mathcal{H}}(v_1) - v_2 = L_{\mathcal{H}}(v_2) - v_1.$$
We will omit the subscript $\mathcal{H}$ from our notations if it is clear from the context.

Let $\mathcal{G}$ be an $r$-graph with the vertex set $[m]$, and $V_1,\ldots, V_m$ be $m$ disjoint sets
(each $V_i$ is allowed to be empty).
The \emph{associated blowup} $\mathcal{G}[V_1,\ldots, V_m]$ of $\mathcal{G}$ is obtained from $\mathcal{G}$
by replacing each vertex $i$ with the set $V_i$ and replacing each edge $i_1\cdots i_r$
with the complete $r$-partite $r$-graph with parts $V_{i_1},\ldots,V_{i_r}$.

For two $r$-graphs $F$ and $\mathcal{H}$, recall that we say $f \colon V(F) \rightarrow V(\mathcal{H})$ is a homomorphism if $f(E) \in \mathcal{H}$ for all $E \in F $. We say $\mathcal{H}$ is \emph{$F$-hom-free}
if there is no homomorphism from $F$ to $\mathcal{H}$. This is equivalent to say that every blowup of $\mathcal{H}$ is $F$-free.
For a family $\mathcal{F}$ of $r$-graphs we say $\mathcal{H}$ is \emph{$\mathcal{F}$-hom-free}
if it is $F$-hom-free for all $F \in \mathcal{F}$. An $r$-graph $F$ is $2$-covered if every $\{u,v\} \subset V(F)$ is contained in some $E \in F$, and a family $\mathcal{F}$ is \emph{$2$-covered} if all $F \in \mathcal{F}$ are $2$-covered. The following observation is easy to follow. 

\begin{observation}\label{F-free=F-hom-free}
If a family $\mathcal{F}$ of $r$-graphs is $2$-covered, then any $r$-graph $\mathcal{H}$ is $\mathcal{F}$-free if and only if it is $\mathcal{F}$-hom-free.
\end{observation}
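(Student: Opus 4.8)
The plan is to prove the two implications separately; only one direction uses the $2$-covered hypothesis, and neither is difficult.

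For the forward direction, suppose $\mathcal{H}$ is $\mathcal{F}$-hom-free; I claim it is $\mathcal{F}$-free. If not, some $F\in\mathcal{F}$ occurs as a subgraph of $\mathcal{H}$, and then the inclusion map $\iota\colon V(F)\hookrightarrow V(\mathcal{H})$ satisfies $\iota(E)=E\in\mathcal{H}$ for every $E\in F$, so $\iota$ is a homomorphism from $F$ to $\mathcal{H}$, contradicting $F$-hom-freeness. This half does not use $2$-coveredness.

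For the reverse direction, suppose $\mathcal{H}$ is not $\mathcal{F}$-hom-free, so there are $F\in\mathcal{F}$ and a homomorphism $f\colon V(F)\to V(\mathcal{H})$, i.e.\ $f(E):=\{f(x)\colon x\in E\}\in\mathcal{H}$ for every $E\in F$. The key step is to use the hypothesis that $F$ is $2$-covered to upgrade $f$ to an injection: if $f(u)=f(v)$ for two distinct $u,v\in V(F)$, pick an edge $E\in F$ with $\{u,v\}\subseteq E$ (which exists since $F$ is $2$-covered); then the set $f(E)$ has at most $r-1$ elements and hence cannot be an edge of the $r$-graph $\mathcal{H}$, contradicting $f(E)\in\mathcal{H}$. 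Thus $f$ is injective on $V(F)$, so $f(V(F))$ together with $\{f(E)\colon E\in F\}$ is an isomorphic copy of $F$ inside $\mathcal{H}$, whence $\mathcal{H}$ is not $\mathcal{F}$-free.

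I expect no genuine obstacle here: the only point requiring a word of care is that an $r$-graph by definition has all edges of size exactly $r$, which is precisely what rules out a degenerate image $f(E)$ of size $<r$. Alternatively, the reverse direction can be phrased through blowups using the remark preceding the observation: if a blowup $\mathcal{H}[V_1,\dots,V_m]$ contained a copy of a $2$-covered $F$, then no two vertices of that copy could lie in a common part $V_i$ (a common edge of the copy would be an edge of the blowup with two vertices in one part), so projecting the copy down to $\mathcal{H}$ would yield $F\subseteq\mathcal{H}$; but the direct argument above is cleaner and I would present that one.
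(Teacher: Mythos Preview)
Your argument is correct and is exactly the standard one; the paper itself does not spell out a proof of this observation (it simply remarks that it is ``easy to follow''), so there is nothing further to compare.
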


\subsection{Lagrangian}

Denote by $\Delta_{m-1}$ the standard $(m-1)$-dimensional simplex,
i.e.
\begin{align}
\Delta_{m-1} = \left\{(x_1,\ldots,x_m)\in [0,1]^m\colon x_1+\ldots+x_m= 1\right\}. \notag
\end{align}
For an $r$-graph $\mathcal{H}$ on $m$ vertices (let us assume for notational transparency that $V(\mathcal{H}) = [m]$) the \emph{Lagrangian polynomial}
of $\mathcal{H}$ is defined by
\begin{align}
p_{\mathcal{H}}(x_1,\ldots,x_m) = \sum_{E\in \mathcal{H}}\prod_{i\in E}x_i. \notag
\end{align}
The \emph{Lagrangian} of $\mathcal{H}$ is defined by
\[
\lambda(\mathcal{H}) = \max\{p_{\mathcal{H}}(x_1,\ldots,x_m): (x_1,\ldots,x_m)\in \Delta_{m-1}\}.
\]
Since $\Delta_{m-1}$ is compact, a well known theorem of Weierstra\ss\ implies that  $\lambda(\mathcal{H})$ is well-defined.
Let
\[
Z(\mathcal{H})=\{(x_1,\ldots,x_m)\in \Delta_{m-1}: p_{\mathcal{H}}(x_1,\ldots,x_m)=\lambda(\mathcal{H})\}.
\]

The following  lemma gives a relationship between $\lambda(\mathcal{H})$ and the maximum number of edges in a blowup of $\mathcal{H}$ (e.g. see Frankl and F\"uredi~\cite{FF89} or Keevash's survey~\cite[{Section~3}]{KE11}).

\begin{lemma}[Frankl and F\"uredi~\cite{FF89}]\label{LEMMA:|H|<=lambda-T-n^r}
Let $r\geq 2$ and $\mathcal{G}$ and $\mathcal{H}$ be two $r$-graphs. Suppose that $\mathcal{G}$ is a blowup of $\mathcal{H}$ with $v(\mathcal{G}) = n$. Then $|\mathcal{G}| \leq \lambda(\mathcal{H}) n^r$.
\end{lemma}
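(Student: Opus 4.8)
The plan is to unwind the definition of a blowup, run a one-line double count, and then normalise. First I would fix a map $\psi\colon V(\mathcal{G})\to V(\mathcal{H})$ witnessing that $\mathcal{G}$ is a blowup of $\mathcal{H}$; in particular every $E\in\mathcal{G}$ satisfies $\psi(E)\in\mathcal{H}$. Write $V(\mathcal{H})=[m]$, put $V_i=\psi^{-1}(i)$ and $n_i=|V_i|$, so that $n_1+\cdots+n_m=n$. Since $\mathcal{H}$ is $r$-uniform, $\psi(E)$ is a genuine $r$-element set for each $E\in\mathcal{G}$, which means $\psi$ is injective on $E$; equivalently, each edge of $\mathcal{G}$ meets every class $V_i$ in at most one vertex.

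Next I would count $\mathcal{G}$ by grouping its edges according to their $\psi$-image. For a fixed edge $f=\{i_1,\dots,i_r\}\in\mathcal{H}$, any $E\in\mathcal{G}$ with $\psi(E)=f$ is determined by its unique vertex in each of $V_{i_1},\dots,V_{i_r}$, so there are at most $n_{i_1}\cdots n_{i_r}$ such edges. Since $\psi(E)\in\mathcal{H}$ for every $E\in\mathcal{G}$, summing over $f\in\mathcal{H}$ gives
\[
|\mathcal{G}|\;\le\;\sum_{\{i_1,\dots,i_r\}\in\mathcal{H}} n_{i_1}\cdots n_{i_r}\;=\;\sum_{E\in\mathcal{H}}\prod_{i\in E} n_i .
\]
Setting $x_i=n_i/n$, the right-hand side equals $n^r\sum_{E\in\mathcal{H}}\prod_{i\in E}x_i=n^r\,p_{\mathcal{H}}(x_1,\dots,x_m)$. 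As $(x_1,\dots,x_m)\in\Delta_{m-1}$, the definition of the Lagrangian yields $p_{\mathcal{H}}(x_1,\dots,x_m)\le\lambda(\mathcal{H})$, whence $|\mathcal{G}|\le\lambda(\mathcal{H})\,n^r$.

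There is essentially no hard step here: the argument is a direct double count followed by the trivial normalisation that converts the class sizes $(n_1,\dots,n_m)$ into a point of $\Delta_{m-1}$. The only point deserving a moment's care is the observation that no edge of $\mathcal{G}$ can have two vertices in the same blowup class $V_i$ — this is precisely what forces $\psi(E)$ to be an $r$-set lying in $\mathcal{H}$ — so that the count grouped by $\psi$-image is legitimate; the remainder is bookkeeping.
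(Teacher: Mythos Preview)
Your argument is correct and is exactly the standard proof of this fact. The paper itself does not supply a proof of this lemma; it is quoted from Frankl--F\"uredi and Keevash's survey, so there is nothing further to compare.
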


As mentioned in \cite{LMR2}, every blowup  $\mathcal{G}$ of an $r$-graph  $\mathcal{H}$ with $|\mathcal{G}|=\lambda(\mathcal{H})v(\mathcal{G})^3$ needs to be regular in the sense that all vertices have the same degree. In the converse direction, $\mathcal{H}$ can still have much larger Lagrangians than  $|\mathcal{G}|/v(\mathcal{G})^3$. For instance, the Lagrangian of the Fano plane is 1/27 but not 1/49.

Fixed a  positive integer $t$, let $K_{3(t+1)}^{3-}$ denote the 3-graph obtained by the complete 3-graph $K_{3(t+1)}^3$ defined on $[3t+3]$  minus the  edge $\{3t+1,3t+2, 3t+3\}$. The following lemma shows that the Lagrangian of $K_{3(t+1)}^{3-}$ is an irrational number by an easy calculation.

\begin{lemma}\label{LEMMA:lambda(K-)-is-irrational}
\[
\lambda(K_{3(t+1)}^{3-})=\frac{t-t^2-3t^3-t^4}{4}-\frac{(3t^3+6t^2-t)\sqrt{9t^2+18t-3}}{36}.
\]
\end{lemma}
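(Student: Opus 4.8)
The plan is to exploit the large automorphism group of $K_{3(t+1)}^{3-}$ to cut the computation of $\lambda\bigl(K_{3(t+1)}^{3-}\bigr)$ down to one variable and then solve a quadratic. Write $\mathcal H=K_{3(t+1)}^{3-}$ on $[3t+3]$ with missing edge $e_0=\{3t+1,3t+2,3t+3\}$, so that $S_{[3t]}\times S_{\{3t+1,3t+2,3t+3\}}\le\mathrm{Aut}(\mathcal H)$. First I would show that some $\mathbf x\in Z(\mathcal H)$ has the form $x_1=\cdots=x_{3t}=a$, $x_{3t+1}=x_{3t+2}=x_{3t+3}=b$, by the usual balancing argument. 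Fixing $x_{3t+1},x_{3t+2},x_{3t+3}$ and viewing $p_{\mathcal H}$ as a function of $(x_1,\dots,x_{3t})$ on a slice $\sum_{i\le 3t}x_i=\text{const}$, one checks — using that every pair inside $[3t]$ and every triple meeting $[3t]$ in exactly two vertices lies in $\mathcal H$ — that $p_{\mathcal H}$ equals the elementary symmetric polynomial $e_3(x_1,\dots,x_{3t})$ plus a nonnegative multiple of $e_2(x_1,\dots,x_{3t})$ plus a term constant on the slice; by Maclaurin's inequality both $e_2$ and $e_3$ are maximized, for a fixed sum of nonnegative arguments, when the arguments are equal, so replacing $x_1,\dots,x_{3t}$ by their common average does not decrease $p_{\mathcal H}$. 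Applied to $(x_{3t+1},x_{3t+2},x_{3t+3})$ the argument is cleaner still: $e_0\notin\mathcal H$ kills the cubic term, leaving a nonnegative multiple of $e_2(x_{3t+1},x_{3t+2},x_{3t+3})$ plus a slice-constant term, so equalizing those three coordinates also does not decrease $p_{\mathcal H}$.

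It then remains to maximize $p_{\mathcal H}$ over $a,b\ge 0$ with $3ta+3b=1$. I would note first that $a,b>0$ at the optimum: if $b=0$ then $p_{\mathcal H}$ reduces to $\lambda(K_{3t}^3)=\binom{3t}{3}/(3t)^3$, and if $a=0$ then $p_{\mathcal H}=0$, both strictly below the value we are about to obtain. With $a,b>0$, the Lagrange condition at an interior maximum forces $\partial p_{\mathcal H}/\partial x_1=\partial p_{\mathcal H}/\partial x_{3t+1}$; evaluating both partials at the symmetric point gives
\[
\binom{3t-1}{2}a^2+3(3t-1)ab+3b^2=\binom{3t}{2}a^2+6t\,ab .
\]
Writing $s=b/a$ and using $\binom{3t}{2}-\binom{3t-1}{2}=3t-1$, this becomes $3s^2+3(t-1)s-(3t-1)=0$, whose discriminant is $9t^2+18t-3>0$ and whose admissible (positive) root is $s=\tfrac16\bigl(3(1-t)+\sqrt{9t^2+18t-3}\bigr)$; positivity of this root is exactly the inequality $\sqrt{9t^2+18t-3}>3(t-1)$, which squares to $36t>12$.

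Finally I would back-substitute. From $3ta+3b=1$ we get $a=\tfrac{1}{3(t+s)}$, and at the symmetric point $p_{\mathcal H}=e_3-b^3=\binom{3t}{3}a^3+3\binom{3t}{2}a^2b+9t\,ab^2=a^3\bigl(\binom{3t}{3}+3\binom{3t}{2}s+9ts^2\bigr)$. Replacing $9ts^2$ by $3t(3t-1)-9t(t-1)s$ via the quadratic collapses the bracket to $\tfrac t2\bigl((3t-1)(3t+4)+9(t+1)s\bigr)$, so that
\[
\lambda(\mathcal H)=\frac{t\bigl((3t-1)(3t+4)+9(t+1)s\bigr)}{54\,(t+s)^{3}} .
\]
Substituting $t+s=\tfrac16\bigl(3(t+1)+\sqrt{9t^2+18t-3}\bigr)$, expanding $(t+s)^3$, and rationalizing by multiplying numerator and denominator by the conjugate of $9t(t+1)(t+2)+(3t^2+6t-1)\sqrt{9t^2+18t-3}$, one finds that all terms of degree $\ge 4$ in $t$ cancel, the denominator becomes $72$, and
\[
\lambda\bigl(K_{3(t+1)}^{3-}\bigr)=\frac{t-t^2-3t^3-t^4}{4}+\frac{(3t^3+6t^2-t)\sqrt{9t^2+18t-3}}{36}.
\]
(The sign in front of the radical is $+$, forced by the positive root $s$; the expression in the statement should be read accordingly. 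As a sanity check, $t=1$ gives $-1+\tfrac{4\sqrt6}{9}\approx0.089$, consistent with $0.08=\lambda(K_5^3)\le\lambda(K_6^{3-})\le\lambda(K_6^3)=5/54$.)

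The symmetrization and the quadratic are routine, so "easy calculation" is fair; the only genuine work — and the only real obstacle — is keeping the degree-six polynomial bookkeeping straight in the rationalization step.
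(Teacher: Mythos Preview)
Your argument is correct and follows essentially the same route as the paper: symmetrize via AM--GM/Maclaurin to reduce to two parameters $a,b$ with $3ta+3b=1$, then optimize. The only cosmetic difference is that the paper eliminates $b$ and maximizes the cubic $f(a)=2a^3-(3t+3)a^2+2a$ directly, whereas you use the equal-partials Lagrange condition to obtain a quadratic in $s=b/a$; these are equivalent one-line variants. You are also right about the sign: the paper's own computation (its displayed formula for $\lambda$ in terms of $s=3t$) carries a $+$ in front of the radical, so the $-$ in the lemma statement is a typo.
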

\begin{proof}
Notice that
\begin{align}
&p_{K_{3(t+1)}^{3-}}(x_1,\ldots, x_{3t+3}) \notag \\
=& \sum_{\{i, j, k\}\in \binom{[3t+3]}{3}}x_ix_jx_k - x_{3t+1}x_{3t+2}x_{3t+3} \notag\\
=& \sum_{\{i, j, k\}\in \binom{[3t]}{3}}x_ix_jx_k+\left(x_{3t+1}+x_{3t+2}+x_{3t+3}\right)\sum_{\{i, j\}\in \binom{[3t]}{2}}x_ix_j \notag\\
+&(x_{3t+1}x_{3t+2}+x_{3t+1}x_{3t+3}+x_{3t+2}x_{3t+3})\sum_{j\in [3t]}x_j.\notag
\end{align}
Set  $a=\frac{\sum_{i\in [3t]}x_i}{3t}$ and $b=\frac{x_{3t+1}+x_{3t+2}+x_{3t+3}}{3}$. Then
\[
3ta+3b=1.
\]
It follows from the AM-GM inequality that
\begin{align}\label{ALIGN:p_K-<..}
p_{K_{3(t+1)}^{3-}}(x_1,\ldots, x_{3t+3})&\leq  \binom{3t}{3}a^3+3b\binom {3t}{2}a^2+3b^2\cdot 3ta 
\end{align}
where the equality holds if and only if $x_i=a$ for $i\in [3t]$ and $x_{3t+1}=x_{3t+2}=x_{3t+3}=b$. Substituting $s=3t$ and $b=\frac{1-sa}{3}$ into \eqref{ALIGN:p_K-<..} yields that
\begin{align*}
p_{K_{3(t+1)}^{3-}}(x_1,\ldots, x_{3t+3})&\leq  \binom{s}{3}a^3+(1-sb)\binom {s}{2}a^2+\frac{1}{3}s(1-sa)^2a \notag\\
&=\frac{s}{6}\left(2a^3-(s+3)a^2+2b\right).
\end{align*}

Let
\[
f(a)=2a^3-(s+3)a^2+2a,
\]
and
\[
a(s)=\frac{s+3-\sqrt{s^2+6s-3}}{6}.
\]
Then $0<a(s)<1/s$ and $f(b)$ attains its maximum when $a=a(s)$. By the above arguments, we know that
\begin{align}\label{lambda(K-)}
\lambda(K_{3(t+1)}^{3-})&=  \max\left\{\frac{s}{6}\left(2a^3-(s+3)a^2+2a\right): 0\le a\le \frac{1}{s}\right\} \notag\\
&=\frac{s}{6}f(a(s))\notag\\
&=\frac{27s-9s^2-9s^3-s^4+s(s^2+6s-3)\sqrt{s^2+6s-3}}{324}\notag\\
&=\frac{t-t^2-3t^3-t^4}{4}-\frac{(3t^3+6t^2-t)\sqrt{9t^2+18t-3}}{36}.
\end{align}
\end{proof}

\textbf{Remarks.}
\begin{itemize}
\item For every integer $t\ge 1$,  $\sqrt{9t^2+18t-3}$ is an irrational number. Thus, by Lemma \ref{LEMMA:lambda(K-)-is-irrational}, $\lambda(K_{3(t+1)}^{3-})$ is irrational.

\item Let $a=\frac{2}{3t+3+\sqrt{9t^2+18t-3}}$, $b=\frac{1}{3}-ta$, through the proof of Lemma \ref{LEMMA:lambda(K-)-is-irrational}, we have
\[
Z\left(K_{3(t+1)}^{3-}\right)=\left\{\left(\underbrace{a,\ldots,a}_{3t},b,b,b\right)\right\}.
\]
\end{itemize}

\section{Mix-crossed blowup}\label{Mix-crossed-blowup}

In order to construct finite families of 3-graphs with infinitely many extremal constructions, Hou, Li, Liu, Mubayi and Zhang \cite{hou2022hypergraphs} defined an operation on $3$-graphs. Motivated by it, we give the following operation, which plays a key role in our proof.

\begin{definition}[Mix-crossed blowup]
Let $a,b,k$ be positive integers with $k\geq a+b+1$ and let  $\mathcal{G}$ be a $3$-graph. Suppose that  $\{v_1, v_2\}\subset V(\mathcal{G})$ is a pair of vertices with $d(v_1, v_2)=k$. Fix an ordering of vertices in $N_{\mathcal{G}}(v_1, v_2)$, say $N_{\mathcal{G}}(v_1, v_2)=\{u_1, \ldots, u_k\}$. The $(a,b)$-mix-crossed blowup of $\mathcal{G}$ on $\{v_1, v_2\}$, denoted by $\mathcal{G}\boxtimes_{(a,b)} \{v_1, v_2\}$, is a $3$-graph constructed from $\mathcal{G}$ and defined as follows.
\begin{itemize}
  \item [(a)] Remove all edges containing the pair $\{v_1, v_2\}$ from $\mathcal{G}$,
  \item [(b)] add two new vertices $v_1'$ and $v_2'$, make $v_1'$ a clone of $v_1$ and $v_2'$ a clone of $v_2$,
  \item [(c)] for every $i\in [a]$ add the edge set $\{u_iv_1v_2,u_iv_1v_2',u_iv_1'v_2,u_iv_1'v_2'\}$,
                 for $i\in[a+1,a+b]$ add the edge set $\{u_iv_1v_1',u_iv_1v_2',u_iv_2v_1',u_iv_2v_2'\}$, and for $i\in[a+b+1, k]$ add the edge set $\{u_iv_1v_1',u_iv_1v_2,u_iv_1'v_2',u_kv_2v_2'\}$.
\end{itemize}
\end{definition}

\begin{figure}[htbp]
\centering
\tikzset{every picture/.style={line width=0.75pt}} 

\begin{tikzpicture}[x=0.9pt,y=0.9pt,yscale=-1,xscale=1]

\draw  [color={rgb, 255:red, 208; green, 2; blue, 27 }  ,draw opacity=1 ][fill={rgb, 255:red, 208; green, 2; blue, 27 }  ,fill opacity=1 ] (319.8,26.6) .. controls (319.8,24.61) and (321.41,23) .. (323.4,23) .. controls (325.39,23) and (327,24.61) .. (327,26.6) .. controls (327,28.59) and (325.39,30.2) .. (323.4,30.2) .. controls (321.41,30.2) and (319.8,28.59) .. (319.8,26.6) -- cycle ;
\draw  [color={rgb, 255:red, 248; green, 231; blue, 28 }  ,draw opacity=1 ][fill={rgb, 255:red, 248; green, 231; blue, 28 }  ,fill opacity=1 ] (350.8,26.6) .. controls (350.8,24.61) and (352.41,23) .. (354.4,23) .. controls (356.39,23) and (358,24.61) .. (358,26.6) .. controls (358,28.59) and (356.39,30.2) .. (354.4,30.2) .. controls (352.41,30.2) and (350.8,28.59) .. (350.8,26.6) -- cycle ;
\draw  [color={rgb, 255:red, 144; green, 19; blue, 254 }  ,draw opacity=1 ][fill={rgb, 255:red, 144; green, 19; blue, 254 }  ,fill opacity=1 ] (383.8,26.6) .. controls (383.8,24.61) and (385.41,23) .. (387.4,23) .. controls (389.39,23) and (391,24.61) .. (391,26.6) .. controls (391,28.59) and (389.39,30.2) .. (387.4,30.2) .. controls (385.41,30.2) and (383.8,28.59) .. (383.8,26.6) -- cycle ;
\draw  [color={rgb, 255:red, 144; green, 19; blue, 254 }  ,draw opacity=1 ][fill={rgb, 255:red, 144; green, 19; blue, 254 }  ,fill opacity=1 ] (416.8,26.6) .. controls (416.8,24.61) and (418.41,23) .. (420.4,23) .. controls (422.39,23) and (424,24.61) .. (424,26.6) .. controls (424,28.59) and (422.39,30.2) .. (420.4,30.2) .. controls (418.41,30.2) and (416.8,28.59) .. (416.8,26.6) -- cycle ;
\draw  [color={rgb, 255:red, 144; green, 19; blue, 254 }  ,draw opacity=1 ][fill={rgb, 255:red, 144; green, 19; blue, 254 }  ,fill opacity=1 ] (478.8,26.6) .. controls (478.8,24.61) and (480.41,23) .. (482.4,23) .. controls (484.39,23) and (486,24.61) .. (486,26.6) .. controls (486,28.59) and (484.39,30.2) .. (482.4,30.2) .. controls (480.41,30.2) and (478.8,28.59) .. (478.8,26.6) -- cycle ;
\draw [color={rgb, 255:red, 208; green, 2; blue, 27 }  ,draw opacity=1 ][line width=0.75]    (378,107) -- (445.73,106.2) ;
\draw [color={rgb, 255:red, 144; green, 19; blue, 254 }  ,draw opacity=1 ]   (376,110) -- (443.73,109.2) ;
\draw [color={rgb, 255:red, 208; green, 2; blue, 27 }  ,draw opacity=1 ]   (376,166) -- (443.73,165.2) ;
\draw [color={rgb, 255:red, 144; green, 19; blue, 254 }  ,draw opacity=1 ]   (376,169) -- (443.73,168.2) ;
\draw [color={rgb, 255:red, 248; green, 231; blue, 28 }  ,draw opacity=1 ]   (375.91,106.23) -- (376.82,168.96) ;
\draw [color={rgb, 255:red, 144; green, 19; blue, 254 }  ,draw opacity=1 ]   (372.91,106.24) -- (373.82,168.97) ;
\draw [color={rgb, 255:red, 248; green, 231; blue, 28 }  ,draw opacity=1 ]   (445.91,105.23) -- (446.82,170.96) ;
\draw [color={rgb, 255:red, 144; green, 19; blue, 254 }  ,draw opacity=1 ]   (442.91,105.24) -- (443.82,170.97) ;
\draw [color={rgb, 255:red, 248; green, 231; blue, 28 }  ,draw opacity=1 ]   (375.91,106.23) -- (445.73,165.65) ;
\draw [color={rgb, 255:red, 208; green, 2; blue, 27 }  ,draw opacity=1 ]   (374,109) -- (443.73,168.2) ;
\draw [color={rgb, 255:red, 248; green, 231; blue, 28 }  ,draw opacity=1 ]   (447.08,109.21) -- (377.06,167.37) ;
\draw [color={rgb, 255:red, 208; green, 2; blue, 27 }  ,draw opacity=1 ]   (445.73,106.2) -- (375.72,164.35) ;
\draw  [color={rgb, 255:red, 0; green, 0; blue, 0 }  ,draw opacity=1 ][fill={rgb, 255:red, 0; green, 0; blue, 0 }  ,fill opacity=1 ] (371.13,107.8) .. controls (371.13,105.81) and (372.75,104.2) .. (374.73,104.2) .. controls (376.72,104.2) and (378.33,105.81) .. (378.33,107.8) .. controls (378.33,109.79) and (376.72,111.4) .. (374.73,111.4) .. controls (372.75,111.4) and (371.13,109.79) .. (371.13,107.8) -- cycle ;
\draw  [color={rgb, 255:red, 0; green, 0; blue, 0 }  ,draw opacity=1 ][fill={rgb, 255:red, 0; green, 0; blue, 0 }  ,fill opacity=1 ] (441.13,167.2) .. controls (441.13,165.21) and (442.75,163.6) .. (444.73,163.6) .. controls (446.72,163.6) and (448.33,165.21) .. (448.33,167.2) .. controls (448.33,169.19) and (446.72,170.8) .. (444.73,170.8) .. controls (442.75,170.8) and (441.13,169.19) .. (441.13,167.2) -- cycle ;
\draw  [color={rgb, 255:red, 0; green, 0; blue, 0 }  ,draw opacity=1 ][fill={rgb, 255:red, 0; green, 0; blue, 0 }  ,fill opacity=1 ] (371.22,167.37) .. controls (371.22,165.38) and (372.83,163.77) .. (374.82,163.77) .. controls (376.81,163.77) and (378.42,165.38) .. (378.42,167.37) .. controls (378.42,169.36) and (376.81,170.97) .. (374.82,170.97) .. controls (372.83,170.97) and (371.22,169.36) .. (371.22,167.37) -- cycle ;
\draw  [color={rgb, 255:red, 0; green, 0; blue, 0 }  ,draw opacity=1 ][fill={rgb, 255:red, 0; green, 0; blue, 0 }  ,fill opacity=1 ] (440.13,107.8) .. controls (440.13,105.81) and (441.75,104.2) .. (443.73,104.2) .. controls (445.72,104.2) and (447.33,105.81) .. (447.33,107.8) .. controls (447.33,109.79) and (445.72,111.4) .. (443.73,111.4) .. controls (441.75,111.4) and (440.13,109.79) .. (440.13,107.8) -- cycle ;
\draw  [dash pattern={on 0.84pt off 2.51pt}]  (446,27) -- (458.37,26.9) ;

\draw (320,38) node [anchor=north west][inner sep=0.75pt]  [font=\tiny] [align=left] {$v_3$};
\draw (350,38) node [anchor=north west][inner sep=0.75pt]  [font=\tiny] [align=left] {$v_4$};
\draw (383,38) node [anchor=north west][inner sep=0.75pt]  [font=\tiny] [align=left] {$v_5$};
\draw (415,38) node [anchor=north west][inner sep=0.75pt]  [font=\tiny] [align=left] {$v_6$};
\draw (475,38) node [anchor=north west][inner sep=0.75pt]  [font=\tiny] [align=left] {$v_{3t+3}$};
\draw (360,100) node [anchor=north west][inner sep=0.75pt]  [font=\tiny] [align=left] {$v_1$};
\draw (360,170) node [anchor=north west][inner sep=0.75pt]  [font=\tiny] [align=left] {$v_1'$};
\draw (450,98) node [anchor=north west][inner sep=0.75pt]  [font=\tiny] [align=left] {$v_2$};
\draw (450,168) node [anchor=north west][inner sep=0.75pt]  [font=\tiny] [align=left] {$v'_2$};

\end{tikzpicture}
\caption{An example: $K_{3t+3}^3\boxtimes_{(1,1)}\{v_1,v_2\}$}
\end{figure}
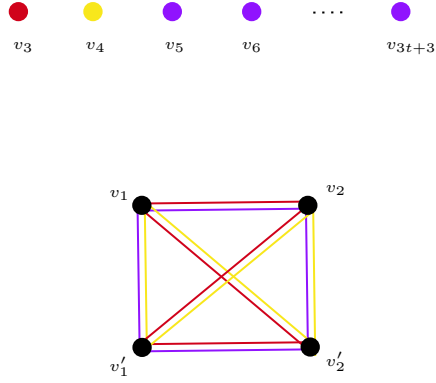

By the definition of the mix-crossed blowup, it is easy to check that
\begin{observation}\label{mix-crossed-blowup-2-covered}
The $3$-graph $\mathcal{G}\boxtimes_{(a,b)}\{v_1,v_2\}$ is $2$-covered iff $\mathcal{G}$ is $2$-covered.
\end{observation}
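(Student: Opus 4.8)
The plan is to read both implications straight off the construction by a short case analysis on the type of vertex pair, using two structural observations: every edge added in step~(c) consists of exactly one vertex of $N_{\mathcal{G}}(v_1,v_2)$ together with two vertices from $\{v_1,v_1',v_2,v_2'\}$; and every edge produced by the cloning in step~(b) contains $v_1'$ or $v_2'$ but not both, and no edge of $\mathcal{G}\boxtimes_{(a,b)}\{v_1,v_2\}$ contains both $v_1$ and $v_1'$, or both $v_2$ and $v_2'$, except those explicitly listed in step~(c). Throughout write $W=V(\mathcal{G})\setminus\{v_1,v_2\}$, so that $\{u_1,\dots,u_k\}\subseteq W$ and $V(\mathcal{G}\boxtimes_{(a,b)}\{v_1,v_2\})=W\cup\{v_1,v_2,v_1',v_2'\}$.

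For the direction ``$\mathcal{G}$ $2$-covered $\Rightarrow$ $\mathcal{G}\boxtimes_{(a,b)}\{v_1,v_2\}$ $2$-covered'' I would split the pairs into (i) pairs inside $W$; (ii) pairs $\{v_1,y\},\{v_2,y\},\{v_1',y\},\{v_2',y\}$ with $y\in W$; and (iii) the six pairs inside $\{v_1,v_2,v_1',v_2'\}$. For (i), a covering edge $E$ of $\mathcal{G}$ cannot contain both $v_1$ and $v_2$ (it has only three vertices, and one cannot fit $v_1,v_2$ and two distinct vertices of $W$), so $E$ is not deleted in step~(a) and still covers the pair. For (ii), if $y\notin N_{\mathcal{G}}(v_1,v_2)$ then $\{v_1,v_2,y\}\notin\mathcal{G}$, so any edge of $\mathcal{G}$ covering $\{v_1,y\}$ avoids $v_2$, hence survives; since $v_1'$ is a clone of $v_1$ in the graph obtained after step~(a), the pair $\{v_1',y\}$ inherits such a covering edge, and the cases of $v_2,v_2'$ are symmetric. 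If $y=u_i$, one checks range by range that in each of $[a]$, $[a+1,a+b]$, $[a+b+1,k]$ step~(c) supplies an edge through $u_i$ containing $v_1$, one containing $v_2$, one containing $v_1'$, and one containing $v_2'$. For (iii), the edges $u_1v_1v_2,\,u_1v_1v_2',\,u_1v_1'v_2,\,u_1v_1'v_2'$ (taking $i=1\in[a]$, using $a\ge 1$) cover $\{v_1,v_2\},\{v_1,v_2'\},\{v_1',v_2\},\{v_1',v_2'\}$, while $u_{a+1}v_1v_1'$ and $u_{a+1}v_2v_2'$ (taking $i=a+1\in[a+1,a+b]$, using $b\ge 1$) cover $\{v_1,v_1'\}$ and $\{v_2,v_2'\}$; the hypothesis $k\ge a+b+1$ guarantees all indices invoked exist.

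For the converse I would argue by contrapositive. Suppose some pair $\{x,y\}\subseteq V(\mathcal{G})$ lies in no edge of $\mathcal{G}$. Since $d_{\mathcal{G}}(v_1,v_2)=k\ge a+b+1>0$, the pair $\{x,y\}$ is not $\{v_1,v_2\}$, so at most one of $x,y$ lies in $\{v_1,v_2\}$. If $\{x,y\}\subseteq W$, then no edge of $\mathcal{G}\boxtimes_{(a,b)}\{v_1,v_2\}$ contains $\{x,y\}$: a surviving edge of $\mathcal{G}$ never contains it, a step~(c) edge meets $W$ in only one vertex, and a cloned edge $\{v_1',s,t\}$ or $\{v_2',s,t\}$ containing $\{x,y\}$ would force $\{v_1,x,y\}\in\mathcal{G}$ or $\{v_2,x,y\}\in\mathcal{G}$, contradicting the choice of $\{x,y\}$. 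If, say, $x=v_1$ and $y\in W$, then $\{v_1,v_2,y\}\notin\mathcal{G}$ gives $y\notin N_{\mathcal{G}}(v_1,v_2)$, so no step~(c) edge contains $y$; the surviving edges of $\mathcal{G}$ through $v_1$ do not cover $\{v_1,y\}$, and cloned edges do not contain $v_1$; hence $\{v_1,y\}$ is uncovered, and the case $x=v_2$ is symmetric. In all cases $\mathcal{G}\boxtimes_{(a,b)}\{v_1,v_2\}$ fails to be $2$-covered.

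The only genuinely fiddly point is the bookkeeping in case (ii) when $y=u_i$: one must confirm that each of $\{v_1,u_i\},\{v_2,u_i\},\{v_1',u_i\},\{v_2',u_i\}$ — all of which lost their only potential covering edge $\{v_1,v_2,u_i\}$ in step~(a) — is recovered by the prescribed edge set in each of the three index ranges, which is immediate from inspecting the three lists. The inequalities $a\ge 1$, $b\ge 1$, $k\ge a+b+1$ are exactly what the argument for case (iii) needs. (In the third list the last edge, printed as $u_kv_2v_2'$, should read $u_iv_2v_2'$; with this reading every range contributes an edge through $v_2$ and $v_2'$, as required above.)
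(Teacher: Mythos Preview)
Your proof is correct. The paper states this observation without proof, declaring only that ``it is easy to check'' from the definition; your case analysis carries out exactly that check, and the structural remarks you isolate (that step-(c) edges meet $W$ in a single vertex, and that cloned edges from step~(b) avoid $v_1$ and $v_2$ because all $\{v_1,v_2\}$-edges were removed in step~(a)) are precisely what makes the verification routine. Your note about the typo $u_kv_2v_2'$ versus $u_iv_2v_2'$ is well taken and matches the evident intent of the construction.
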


We end this section by the following key theorem.

\begin{theorem}\label{mix-crossed-blowup-lag}
Let  $\mathcal{G}$ be a $m$-vertex $3$-graph and $\{v_1,v_2\}\subset V(\mathcal{G})$ be a pair of vertices with $d(v_1,v_2)=k\ge a+b+1$ where $a>0, b>0$ be integers. Suppose that  $\{v_1,v_2\}$ is symmetric in $\mathcal{G}$. Then $\lambda(\mathcal{G}\boxtimes_{(a,b)}\{v_1,v_2\}) = \lambda(\mathcal{G})$. Moreover, if $(x_1,\ldots,x_m)\in Z(\mathcal{G})$, then
$\left((x_1+x_2)/4, (x_1+x_2)/4, (x_1+x_2)/4, (x_1+x_2)/4, x_3, \ldots, x_m\right)\in Z(\mathcal{G}\boxtimes_{(a,b)}\{v_1,v_2\}).$
\end{theorem}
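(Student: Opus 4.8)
The plan is to compute both Lagrangian polynomials explicitly in terms of a handful of aggregate quantities and then reduce the whole statement to the elementary inequality $pq\le\big(\tfrac{p+q}{2}\big)^2$.

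First I would fix notation. Relabel so that the symmetric pair is $\{1,2\}$, the remaining vertices of $\mathcal{G}$ are $3,\dots,m$, and $N_{\mathcal{G}}(1,2)=\{u_1,\dots,u_k\}\subseteq\{3,\dots,m\}$; write $\mathcal{G}'=\mathcal{G}\boxtimes_{(a,b)}\{1,2\}$, whose vertex set is $\{1,2,1',2',3,\dots,m\}$. Since $\{1,2\}$ is symmetric, after deleting the edges through the pair the links of $1$ and of $2$ become one and the same graph $L'$ on $\{3,\dots,m\}$; set $Q(x)=\sum_{\{a,b\}\in L'}x_ax_b$ and let $R(x)$ be the Lagrangian polynomial of $\mathcal{G}[\{3,\dots,m\}]$. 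Grouping the edges of $\mathcal{G}$ according to their intersection with $\{1,2\}$ gives
\[
p_{\mathcal{G}}(x)=x_1x_2\,S(x)+(x_1+x_2)\,Q(x)+R(x),\qquad S(x):=\sum_{i=1}^{k}x_{u_i}.
\]
For $\mathcal{G}'$ I would use two observations: since $1'$ (resp.\ $2'$) is a clone of $1$ (resp.\ $2$), i.e.\ has the same link $L'$ and in particular is not adjacent to $1$ (resp.\ $2$), the edges inherited from $\mathcal{G}$ contribute $(z_1+z_2+z_{1'}+z_{2'})\,Q(z)+R(z)$; and for each $u_i$ the four edges added in step (c) form a complete bipartite $3$-graph with apex $u_i$ over one of the three perfect matchings of $\{1,2,1',2'\}$ --- the matching $\{1,1'\}\mid\{2,2'\}$ if $i\le a$, the matching $\{1,2\}\mid\{1',2'\}$ if $a<i\le a+b$, and the matching $\{1,2'\}\mid\{1',2\}$ if $i>a+b$. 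Writing $A=z_1$, $B=z_2$, $C=z_{1'}$, $D=z_{2'}$, $s=A+B+C+D$, and $\sigma_a=\sum_{i\le a}z_{u_i}$, $\sigma_b=\sum_{a<i\le a+b}z_{u_i}$, $\sigma_c=\sum_{i>a+b}z_{u_i}$ (so that $\sigma_a+\sigma_b+\sigma_c=S(z)$), this yields
\[
p_{\mathcal{G}'}(z)=s\,Q(z)+R(z)+(A{+}C)(B{+}D)\,\sigma_a+(A{+}B)(C{+}D)\,\sigma_b+(A{+}D)(B{+}C)\,\sigma_c .
\]

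Next I would prove the two inequalities. For $\lambda(\mathcal{G}')\le\lambda(\mathcal{G})$: given any $z$ in the simplex, let $x$ be the point on $V(\mathcal{G})$ with $x_1=x_2=s/2$ and $x_i=z_i$ for $i\ge3$; it lies in the simplex, and $Q,R,S$ are unaffected. In each of the three matching terms the two bracketed factors sum to $s$, so by AM--GM each is at most $s^2/4$, and since $\sigma_a,\sigma_b,\sigma_c\ge0$,
\[
p_{\mathcal{G}'}(z)\le s\,Q(z)+R(z)+\tfrac{s^2}{4}\,S(z)=x_1x_2\,S(x)+(x_1+x_2)\,Q(x)+R(x)=p_{\mathcal{G}}(x)\le\lambda(\mathcal{G}).
\]
For the reverse inequality and the ``moreover'', I would take $(x_1,\dots,x_m)\in Z(\mathcal{G})$ and plug the point $y$ from the statement ($y_1=y_2=y_{1'}=y_{2'}=(x_1+x_2)/4$, $y_i=x_i$ for $i\ge3$) into the formula for $p_{\mathcal{G}'}$. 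Now $A=B=C=D=(x_1+x_2)/4$, so each matching term equals $\big(\tfrac{x_1+x_2}{2}\big)^2$ times its $\sigma$, and summing,
\[
p_{\mathcal{G}'}(y)=(x_1+x_2)\,Q(x)+R(x)+\tfrac{(x_1+x_2)^2}{4}\,S(x)=p_{\mathcal{G}}(x)+\tfrac{(x_1-x_2)^2}{4}\,S(x)\ge p_{\mathcal{G}}(x)=\lambda(\mathcal{G}).
\]
Hence $\lambda(\mathcal{G}')\ge\lambda(\mathcal{G})$, which with the upper bound gives $\lambda(\mathcal{G}')=\lambda(\mathcal{G})$; and then $\lambda(\mathcal{G})\le p_{\mathcal{G}'}(y)\le\lambda(\mathcal{G}')=\lambda(\mathcal{G})$ forces $p_{\mathcal{G}'}(y)=\lambda(\mathcal{G}')$, i.e.\ $y\in Z(\mathcal{G}')$.

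I do not expect a genuine obstacle beyond the bookkeeping in the first step: one must check carefully that the edges inherited from $\mathcal{G}$ really contribute exactly $(\sum\text{weights})\,Q+R$ with no extra cross terms (this is where ``clone $=$ identical link'' is used), and recognize that the three edge-packages of step (c) are precisely the three perfect matchings of the $4$-set $\{1,2,1',2'\}$ --- it is this symmetry that lets one apply AM--GM uniformly to all three. The symmetry hypothesis on $\{1,2\}$ enters exactly once, to make the off-pair links of $1$ and $2$ coincide so that a single polynomial $Q$ does the job.
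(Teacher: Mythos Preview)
Your proof is correct and follows essentially the same approach as the paper: both decompose $p_{\mathcal{G}}$ and $p_{\mathcal{G}'}$ into the same pieces (your $R,Q,S,\sigma_a,\sigma_b,\sigma_c$ are the paper's $p_1,p_2,p_3,p_4,p_5,p_6$) and apply AM--GM to the three ``matching'' products. Your handling of the lower bound is slightly more direct --- you compute $p_{\mathcal{G}'}(y)=p_{\mathcal{G}}(x)+\tfrac{(x_1-x_2)^2}{4}S(x)\ge\lambda(\mathcal{G})$ in one line, whereas the paper first shows the symmetrized point $\bigl(\tfrac{x_1+x_2}{2},\tfrac{x_1+x_2}{2},x_3,\dots\bigr)$ lies in $Z(\mathcal{G})$ and then invokes the equality case --- but this is a cosmetic difference, not a substantive one.
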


\begin{proof}
For simplicity, assume that $V(\mathcal{G})=[m]$, $\{v_1,v_2\}=\{1,2\}$ and $N_{\mathcal{G}}(1, 2)=\{3, \ldots, k+2\}$. Let $X_1,\ldots, X_m$ be indeterminates. Since $\{1,2\}$ is symmetric,
\begin{align}
p_{\mathcal{G}}(X_1,\ldots,X_m) = \sum_{E\in \mathcal{H}}\prod_{i\in E}X_i=p_1+p_2(X_1+X_2)+p_3X_1X_2,
\end{align}
where $p_1=p_1(X_3,\ldots,X_m)=\sum_{E\in \mathcal{H}\setminus\{1,2\}}\prod_{i\in E}X_i$, $p_2=\sum_{i,j\in L_{\mathcal{G}}(1)\setminus \{2\}}X_iX_j$ and $p_3=\sum_{i=3}^{k+2}X_i$. For each  $(x_1,\ldots,x_m)\in Z(\mathcal{G})$, using the AM-GM inequality we obtain
\begin{align}
p_{\mathcal{G}}\left(\frac{x_1+x_2}{2}, \frac{x_1+x_2}{2},x_3\ldots, x_m\right)
& = p_1 + p_2\left(\frac{x_1+x_2}{2}+ \frac{x_i+x_j}{2}\right) + p_3\left(\frac{x_1+x_2}{2}\right)^2  \notag\\
& \ge p_1 + p_2(x_1+x_2) + p_3 x_1x_2
= \lambda(p). \notag
\end{align}
This means
\begin{align}\label{optimal-solution-G}
\left(\frac{x_1+x_2}{2}, \frac{x_1+x_2}{2},x_3\ldots, x_m\right)\in Z(\mathcal{G})
\end{align}

Considering the Lagrangian polynomial of $\mathcal{G}'=\mathcal{G}\boxtimes_{(a,b)}\{1,2\}$, we have
\begin{align*}
 p_{\mathcal{G}'}&(X_1,X'_1,X_2,X'_2,X_3,\ldots,X_m)  \notag\\
=& p_1 + p_2(X_1+X'_1+X_2+X'_2)
            + p_{4}(X_1+X'_1)(X_2+X'_2)
            + p_{5}(X_1+X_2)(X'_1+X'_2) \notag\\
&+ p_{6}(X_1+X'_2)(X'_1+X_2), \notag\\
\end{align*}
where $p_4=\sum_{i=3}^{a+2}X_i$,  $p_5=\sum_{i=a+3}^{a+b+2}X_i$ and $p_6=\sum_{i=a+b+3}^{k+2}X_i$. Then $p_3=p_4+p_5+p_6$. Let $\widehat{X}_1 = X_1+ X_1'$ and $\widehat{X}_2 = X_2+X_2'$. Then  using the AM-GM inequality again yields that
\begin{align}\label{equ:hat-p-and-p}
 p_{\mathcal{G}'}&(X_1,X'_1,X_2,X'_2,X_3,\ldots,X_m)  \notag\\
\le& p_1 + p_2(\widehat{X}_1+\widehat{X}_2)
            + p_{4}\left(\frac{\widehat{X}_1+\widehat{X}_2}{2}\right)^2
            + p_{5}\left(\frac{\widehat{X}_1+\widehat{X}_2}{2}\right)^2
            + p_6\left(\frac{\widehat{X}_1+\widehat{X}_2}{2}\right)^2\notag\\
=& p_1 + p_2(\widehat{X}_1+\widehat{X}_2)
            + p_{3}\left(\frac{\widehat{X}_1+\widehat{X}_2}{2}\right)^2 \notag\\
=& p_{\mathcal{G}}((\widehat{X}_1+\widehat{X}_2)/2, (\widehat{X}_1+\widehat{X}_2)/2, X_3, \ldots, X_m),
\end{align}
where the equality holds iff $X_1=X_1'=X_2=X_2'$.

Combing \eqref{optimal-solution-G} and \eqref{equ:hat-p-and-p}, we have that $\lambda(\mathcal{G}') = \lambda(\mathcal{G})$, and if $(x_1,\ldots,x_m)\in Z(\mathcal{G})$, then
\[
\left((x_1+x_2)/4, (x_1+x_2)/4, (x_1+x_2)/4, (x_1+x_2)/4, x_3, \ldots, x_m\right)\in Z(\mathcal{G}').
\]
This completes the proof of Theorem \ref{mix-crossed-blowup-lag}.
\end{proof}


%
%

\section{Construction  of $\mathcal{M}_t$ and  proof of its Tur\'{a}n density}\label{Construction}


In this section, we give a  construction  of $\mathcal{M}_t$ and prove its Tur\'{a}n density. For  a positive integer $t$, recall that  $K_{3t+1}^{3-}$ is the 3-graph obtained by the complete 3-graph $K_{3t+3}^3$ minus the  edge $\{3t+1, 3t+2, 3t+3\}$. For $i\in [t]$, let 
\[
\mathcal{G}_i=K_{3t+3}^{3-}\boxtimes_{(1, i)}\{1, 2\}.
\] 
By Observation \ref{mix-crossed-blowup-2-covered} and Theorem \ref{mix-crossed-blowup-2-covered}, we have

\begin{lemma}\label{Lemma:Gi-2-covered-lag}
For $i\in [t]$, $\mathcal{G}_i$ is $2$-covered and $\lambda(\mathcal{G}_i)=\lambda(K_{3t+3}^{3-})$.
\end{lemma}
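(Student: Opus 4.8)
The plan is to read off Lemma~\ref{Lemma:Gi-2-covered-lag} directly from Observation~\ref{mix-crossed-blowup-2-covered} and Theorem~\ref{mix-crossed-blowup-lag}: one only has to check that $\mathcal{G}:=K_{3t+3}^{3-}$ and the pair $\{1,2\}$ meet the hypotheses of those two results with $(a,b)=(1,i)$, so the whole proof is a short bookkeeping verification.

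First I would observe that $K_{3t+3}^{3-}$ is $2$-covered: every pair $\{u,v\}\subseteq[3t+3]$ lies in $3t+1\ge 4$ triples of $[3t+3]$, and only one triple has been removed, so the pair still lies in an edge. Observation~\ref{mix-crossed-blowup-2-covered} then gives that $\mathcal{G}_i=K_{3t+3}^{3-}\boxtimes_{(1,i)}\{1,2\}$ is $2$-covered.

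Next I would verify the hypotheses of Theorem~\ref{mix-crossed-blowup-lag} with $a=1$ and $b=i$. The deleted edge $\{3t+1,3t+2,3t+3\}$ contains neither $1$ nor $2$, so $L_{\mathcal{G}}(1)=\binom{[3t+3]\setminus\{1\}}{2}$ and $L_{\mathcal{G}}(2)=\binom{[3t+3]\setminus\{2\}}{2}$; consequently $L_{\mathcal{G}}(1)-2=\binom{[3t+3]\setminus\{1,2\}}{2}=L_{\mathcal{G}}(2)-1$, so $\{1,2\}$ is symmetric in $\mathcal{G}$. For the same reason $d_{\mathcal{G}}(1,2)=3t+1=:k$, and since $i\le t$ we have $k=3t+1\ge t+2\ge i+2=a+b+1$, with $a,b>0$ (the boundary case $t=i=1$ reading $4\ge 3$). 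With all hypotheses in place, Theorem~\ref{mix-crossed-blowup-lag} yields $\lambda(\mathcal{G}_i)=\lambda(K_{3t+3}^{3-})$, which finishes the proof.

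I do not anticipate any genuine obstacle; the two points worth spelling out are that the single missing edge of $K_{3t+3}^{3-}$ is disjoint from $\{1,2\}$ — so it changes neither the symmetry of $\{1,2\}$ nor the codegree $d(1,2)$ — and the codegree inequality $3t+1\ge i+2$, which must be checked uniformly over $1\le i\le t$.
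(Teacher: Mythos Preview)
Your proposal is correct and follows exactly the approach the paper takes: the paper simply states that the lemma follows from Observation~\ref{mix-crossed-blowup-2-covered} and Theorem~\ref{mix-crossed-blowup-lag}, without spelling out the hypothesis checks. Your write-up just makes those verifications explicit (that $K_{3t+3}^{3-}$ is $2$-covered, that $\{1,2\}$ is symmetric with codegree $3t+1\ge i+2$), which is fine and arguably an improvement in exposition.
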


For $n\ge 3t+5$, let $\mathcal{G}_n^i$ be a 3-graph on $n$ vertices which is a blowup of $\mathcal{G}_i$ with the maximum number of edges. Then by Lemma \ref{Lemma:Gi-2-covered-lag},
\begin{align}\label{number-edge-Gni}
|\mathcal{G}_n^i|=(1+o(1))\lambda(K_{3t+3}^{3-})n^3
\end{align}
For $i\in [t]$, let $\mathcal{F}_{\infty}(\mathcal{G}_n^i)$ be the (infinite) family of all $3$-graphs $F$ such that $\mathcal{G}_n^i$ is $F$-free for all positive integers $n$, i.e.,
\begin{align}
\mathcal{F}_{\infty}(\mathcal{G}_n^i) = \left\{r{\rm{\text{-}graph}}\ F \colon \mathcal{G}_n^i\ {\rm{is}}\ F{\rm{\text{-}free}}\ {\rm{for\ all\ positive\ integers}}\  n\right\}. \notag
\end{align}
Let
\begin{align}
\mathcal{F}_{m}(\mathcal{G}_n^i) = \left\{F \in \mathcal{F}_{\infty}(\mathcal{G}_n^i) \colon v(F) \leq m\right\}, \notag
\end{align}
and set
\begin{align}\label{M-choice}
\mathcal{M}_t =\bigcap_{i\in [t]} \mathcal{F}_{(3t+6)^2}(\mathcal{G}_n^i).
\end{align}

\textbf{Remark.}
\begin{itemize}
\item Let $\mathcal{K}_{\ell}^3$ be the family  $3$-graphs of $F$ with at most $\binom{\ell}{2}$ edges such that for some $\ell$-set $S$ (called 2-covered set) every pair $\{x, y\}\subset S$ is covered by an edge in $F$. For $F\in \mathcal{K}_{3t+6}^3$, $F$ has a 2-covered set of size $3t+6$. On the other hand, each $\mathcal{G}_n^i$ is $(3t+5)$-partite. So, $F\in \mathcal{F}_{\infty}(\mathcal{G}_n^i)$, which together with $v(F)\le (3t+6)+\binom{3t+6}{2}$ yields that $F\in \mathcal{M}_t$.
\end{itemize}

In  the following of this section, we determine the Tur\'{a}n density of $\mathcal{M}_t$ using the symmetrization method. Let $\mathcal{H}$ be an $r$-graph and $\{u,v\}\subset V(\mathcal{H})$ be two non-adjacent vertices (i.e., no edge contains both $u$ and $v$). We say $u$ and $v$ are \emph{equivalent} if $L_{\mathcal{H}}(u)=L_{\mathcal{H}}(v)$. Otherwise we say they are \emph{non-equivalent}. An equivalence class of $\mathcal{H}$ is a maximal vertex set in which every pair of vertices are equivalent. We say $\mathcal{H}$ is \emph{symmetrized} if for any two non-equivalent vertices $u$, $v \in V(\mathcal{H})$ there is an edge of $\mathcal{H}$ containing both of them.

\begin{definition}[Blowup-invariant]
A family $\mathcal{F}$ of $r$-graphs is blowup-invariant if every $\mathcal{F}$-free $r$-graph is also $\mathcal{F}$-hom-free.
\end{definition}

The following simple lemma is a special case of Lemma~8~\cite{PI14}.
\begin{lemma}[Pikhurko \cite{PI14}]\label{Lemma:blowup in}
The family $\mathcal{M}_t=\bigcap_{i\in [t]}\mathcal{F}_{(3t+3)^2}(\mathcal{G}_n^i)$ is blowup-invariant.
\end{lemma}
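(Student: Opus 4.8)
The plan is to reduce the statement to the cited Lemma 8 of Pikhurko~\cite{PI14} by checking that the family $\mathcal{M}_t$ is of the type covered there, namely that it is obtained by forbidding \emph{all} bounded-size $3$-graphs $F$ that fail to embed (as a subgraph) into every blowup $\mathcal{G}^i_n$ of a fixed family of $3$-graphs $\mathcal{G}_i$. First I would recall the definition of $\mathcal{M}_t$ from~\eqref{M-choice}: it is the intersection over $i\in[t]$ of the families $\mathcal{F}_{m}(\mathcal{G}_n^i)$ of $3$-graphs on at most $m$ vertices that are subgraphs of no blowup of $\mathcal{G}_i$ — here one should note the cosmetic discrepancy between the bound $(3t+6)^2$ used in~\eqref{M-choice} and the bound $(3t+3)^2$ in the statement of this lemma; either is an admissible choice of threshold for the argument, and I would simply state that the proof works verbatim for any fixed finite threshold.

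The key step is the following observation, which is exactly the content of Pikhurko's lemma in this special case. Suppose $\mathcal{H}$ is $\mathcal{M}_t$-free but \emph{not} $\mathcal{M}_t$-hom-free; I want a contradiction. Being not $\mathcal{M}_t$-hom-free means some blowup $\mathcal{H}^{*}$ of $\mathcal{H}$ contains a member $F\in\mathcal{M}_t$ as a subgraph. Since $\mathcal{H}$ is $\mathcal{M}_t$-free, $\mathcal{H}$ does not contain $F$; but $F$ sits inside a blowup of $\mathcal{H}$, so $F$ is $\mathcal{H}$-colorable. The point is then to produce, from this colorable copy, a \emph{different} bounded-size $3$-graph $F'$ that is genuinely forbidden — one that does embed in $\mathcal{H}$ but in no $\mathcal{G}^i_n$ — contradicting $\mathcal{M}_t$-freeness of $\mathcal{H}$. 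Concretely: because $F\in\mathcal{M}_t=\bigcap_i\mathcal{F}_{m}(\mathcal{G}_n^i)$, for each $i\in[t]$ the $3$-graph $F$ embeds in no blowup of $\mathcal{G}_i$, i.e.\ $F$ is not $\mathcal{G}_i$-colorable. One then takes a homomorphism $\phi\colon V(F)\to V(\mathcal{H})$ witnessing $\mathcal{H}$-colorability and lets $F'$ be the subgraph of $\mathcal{H}$ on the image $\phi(V(F))$ together with the edge set $\phi(F)$ (the ``quotient'' of $F$ under $\phi$). Then $F'\subseteq\mathcal{H}$, $v(F')\le v(F)\le m$, and any homomorphism from $F'$ into some $\mathcal{G}^i_n$ would compose with $\phi$ to give a homomorphism $F\to\mathcal{G}^i_n$, i.e.\ would make $F$ be $\mathcal{G}_i$-colorable — contradiction. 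Hence $F'$ lies in every $\mathcal{F}_{\infty}(\mathcal{G}_n^i)$ and has at most $m$ vertices, so $F'\in\mathcal{M}_t$; but $F'\subseteq\mathcal{H}$, contradicting that $\mathcal{H}$ is $\mathcal{M}_t$-free.

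The only point requiring a little care — and the main (mild) obstacle — is the bookkeeping on the vertex-count threshold: I must make sure the quotient $F'$ still has at most $m$ vertices so that it genuinely lands in the truncated family $\mathcal{F}_{m}(\mathcal{G}_n^i)$ rather than merely in $\mathcal{F}_{\infty}(\mathcal{G}_n^i)$. Since $\phi$ can only identify vertices, $v(F')\le v(F)\le m$ automatically, so this is immediate; no blowing-up of the vertex set occurs. Everything else is a direct unwinding of the definitions of blowup, homomorphism, and $\mathcal{F}$-hom-free. I would therefore present the argument as a short, self-contained specialization of~\cite[Lemma~8]{PI14}, flagging that the truncation level is irrelevant as long as it is a fixed constant, and conclude that $\mathcal{M}_t$ is blowup-invariant.
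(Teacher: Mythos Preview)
Your proposal is correct and follows the same route as the paper: the paper states the lemma as a special case of~\cite[Lemma~8]{PI14} without giving any further argument, and what you have written is precisely a self-contained unpacking of that citation. Your observation about the cosmetic discrepancy between $(3t+6)^2$ and $(3t+3)^2$ is also apt and handled correctly.
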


In~\cite{LMR2}, Liu, Mubayi and Reiher summarized the well known method of Zykov~\cite{Zy} symmetrization for solving Tur\'{a}n problems
into the following statement.

\begin{theorem}[Liu, Mubayi and Reiher \cite{LMR2}]\label{THEOREM:ex(n,F)=max-blowup-of-H}
Suppose that $\mathcal{F}$ is a blowup-invariant family of $r$-graphs. If $\mathfrak{H}$ denotes the class of all symmetrized $\mathcal{F}$-free $r$-graphs, then ex$(n, \mathcal{F})=\mathfrak{h}(n)$  holds for every $n \in \mathbb{N}^{+}$,
where $\mathfrak{h}(n)=\max\{|\mathcal{H}|\colon\mathcal{H}\in \mathfrak{H} \text{ and } v(\mathcal{H})=n\}$.
\end{theorem}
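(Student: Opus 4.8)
The plan is to prove $\mathfrak{h}(n)=\mathrm{ex}(n,\mathcal{F})$ by the two obvious inequalities. The bound $\mathfrak{h}(n)\le \mathrm{ex}(n,\mathcal{F})$ is immediate, since a symmetrized $\mathcal{F}$-free $r$-graph is in particular $\mathcal{F}$-free, and $\mathfrak{H}$ is nonempty on every vertex set (the empty $r$-graph is symmetrized), so $\mathfrak{h}(n)$ is well defined. For the reverse bound I would first record a few cheap structural facts. If $L_{\mathcal{H}}(x)=L_{\mathcal{H}}(y)$ with $x\neq y$, then $x$ and $y$ are automatically non-adjacent -- an edge through both would put an $(r-1)$-set containing $x$ into $L_{\mathcal{H}}(x)$, which is impossible -- so equivalence is genuinely an equivalence relation whose classes are exactly the level sets of the link map $v\mapsto L_{\mathcal{H}}(v)$; moreover $d_{\mathcal{H}}(\cdot)$ is constant on each class, the vertices of a class are pairwise non-adjacent, and ``being joined by an edge'' is a well-defined relation between two classes (the link equality transports an edge from one representative to another). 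Consequently $\mathcal{H}$ fails to be symmetrized precisely when there exist two distinct classes $V_j,V_k$ with no edge meeting both.

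Given such a pair, say with $d_{\mathcal{H}}$-value on $V_j$ at least that on $V_k$, fix $u\in V_j$ and form $\mathcal{H}'$ by deleting from $\mathcal{H}$ every edge meeting $V_k$ and adding, for every $w\in V_k$ and every $S\in L_{\mathcal{H}}(u)$, the edge $S\cup\{w\}$; note that each such $S$ avoids $V_k$, since otherwise $u$ would be joined by an edge to a vertex of $V_k$. Three properties need to be checked. (i) $|\mathcal{H}'|\ge|\mathcal{H}|$: no edge of $\mathcal{H}$ contains two vertices of $V_k$, so exactly $|V_k|\cdot d_{\mathcal{H}}(v)$ edges are removed and exactly $|V_k|\cdot d_{\mathcal{H}}(u)$ distinct new edges are added, and $d_{\mathcal{H}}(u)\ge d_{\mathcal{H}}(v)$. (ii) $\mathcal{H}'$ is $\mathcal{F}$-free: the map fixing $V(\mathcal{H})\setminus V_k$ and sending all of $V_k$ to $u$ is a homomorphism $\mathcal{H}'\to\mathcal{H}$ (this is where one uses that the sets $S$ avoid $V_k$, so the image of each edge is a genuine $r$-set of $\mathcal{H}$), hence a copy of some $F\in\mathcal{F}$ inside $\mathcal{H}'$ would give a homomorphism $F\to\mathcal{H}$; but blowup-invariance makes the $\mathcal{F}$-free $\mathcal{H}$ also $\mathcal{F}$-hom-free, a contradiction. (iii) $\mathcal{H}'$ has strictly fewer equivalence classes than $\mathcal{H}$: every vertex of $V_j\cup V_k$ has link $L_{\mathcal{H}}(u)$ in $\mathcal{H}'$, so these merge into a single class, while for any other old class $V_\ell$ the link in $\mathcal{H}'$ of a vertex $x\in V_\ell$ is $\{A\in L_{\mathcal{H}}(x)\colon A\cap V_k=\emptyset\}$ together with a set of newly created link members which -- after repeatedly invoking that no edge of $\mathcal{H}$ joins two vertices of one class -- turns out to depend only on $\ell$, on $V_k$, and on $L_{\mathcal{H}}(u)$, and not on the representative $x$; thus $V_\ell$ is not split.

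With the operation in hand, the theorem follows from an extremal principle that sidesteps any explicit termination argument: among all $\mathcal{F}$-free $r$-graphs on $n$ vertices with $\mathrm{ex}(n,\mathcal{F})$ edges -- a nonempty finite set -- choose one, $\mathcal{H}$, with the fewest equivalence classes. If $\mathcal{H}$ were not symmetrized, applying the construction above would yield an $\mathcal{F}$-free $r$-graph on $n$ vertices with at least $\mathrm{ex}(n,\mathcal{F})$ (hence exactly that many) edges but strictly fewer classes, contradicting the choice of $\mathcal{H}$. Therefore $\mathcal{H}\in\mathfrak{H}$, so $\mathfrak{h}(n)\ge|\mathcal{H}|=\mathrm{ex}(n,\mathcal{F})$, completing the proof. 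I expect the only real obstacle to be property (iii): verifying that the untouched classes do not split is a somewhat fiddly analysis of which $(r-1)$-sets enter or leave a given link, and it rests entirely on the preliminary observation -- worth isolating explicitly -- that the vertices of an equivalence class of $\mathcal{H}$ are pairwise non-adjacent, which is exactly what rules out the auxiliary edges that would otherwise break the argument. The two counting/homomorphism checks (i) and (ii) and the final extremal principle are routine.
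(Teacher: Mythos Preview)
The paper does not give its own proof of this theorem: it is quoted verbatim from \cite{LMR2} and used as a black box, with only the remark that it ``summarize[s] the well known method of Zykov symmetrization.'' So there is no in-paper argument to compare against. Your proposal is exactly the classical Zykov symmetrization that underlies the cited result, and your outline is correct. The three properties (i)--(iii) all check out: for (i) the disjointness of the added edges from the surviving edges (both determined by whether they meet $V_k$) makes the count clean; for (ii) the map collapsing $V_k$ to $u$ is a homomorphism into $\mathcal{H}$, and blowup-invariance upgrades $\mathcal{F}$-free to $\mathcal{F}$-hom-free; for (iii), your concern is justified but resolvable --- if $x,x'$ lie in the same old class $V_\ell$ with $\ell\neq j,k$, then both the link elements lost (those meeting $V_k$) and those gained (sets $(S\setminus\{x\})\cup\{w\}$ with $x\in S\in L_{\mathcal{H}}(u)$) depend only on $L_{\mathcal{H}}(x)=L_{\mathcal{H}}(x')$, so $V_\ell$ does not split, and the preliminary non-adjacency-within-a-class observation is precisely what guarantees $x'\notin S\setminus\{x\}$ in that calculation. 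Choosing an extremal $\mathcal{H}$ minimizing the number of classes then finishes the argument as you describe.
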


Now prove Theorem \ref{THM:main-sec1.1} (a) by showing

\begin{theorem}\label{turan-number-M}
Let $\mathcal{M}_t=\bigcap_{i\in [t]}\mathcal{F}_{(3t+3)^2}(\mathcal{G}_n^i)$. Then  $\pi(\mathcal{M}_t)=6\lambda(K_{3(t+1)}^{3-})$.
\end{theorem}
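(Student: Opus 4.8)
The plan is to combine the Lagrangian computation of Lemma~\ref{LEMMA:lambda(K-)-is-irrational} with the symmetrization machinery recorded in Theorem~\ref{THEOREM:ex(n,F)=max-blowup-of-H}. First I would establish the lower bound $\pi(\mathcal{M}_t)\ge 6\lambda(K_{3(t+1)}^{3-})$. By Lemma~\ref{Lemma:Gi-2-covered-lag} each $\mathcal{G}_i$ is $2$-covered with $\lambda(\mathcal{G}_i)=\lambda(K_{3t+3}^{3-})=\lambda(K_{3(t+1)}^{3-})$, and by construction (the definition of $\mathcal{F}_{(3t+3)^2}(\mathcal{G}_n^i)$) the blowups $\mathcal{G}_n^i$ are $\mathcal{M}_t$-free for every $n$. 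Hence $\mathrm{ex}(n,\mathcal{M}_t)\ge |\mathcal{G}_n^1| = (1+o(1))\lambda(K_{3(t+1)}^{3-})n^3$ by \eqref{number-edge-Gni}, and dividing by $\binom{n}{3}$ and letting $n\to\infty$ gives $\pi(\mathcal{M}_t)\ge 6\lambda(K_{3(t+1)}^{3-})$.

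For the upper bound I would invoke blowup-invariance (Lemma~\ref{Lemma:blowup in}) so that Theorem~\ref{THEOREM:ex(n,F)=max-blowup-of-H} applies: it suffices to show that every symmetrized $\mathcal{M}_t$-free $3$-graph $\mathcal{H}$ on $n$ vertices has at most $\lambda(K_{3(t+1)}^{3-})n^3$ edges. The key structural step is to analyze the quotient of a symmetrized $\mathcal{H}$ by its equivalence classes: in a symmetrized $r$-graph the reduced $3$-graph $\mathcal{H}^{\ast}$ on the set of equivalence classes is such that $\mathcal{H}$ is a blowup of $\mathcal{H}^{\ast}$, so by Lemma~\ref{LEMMA:|H|<=lambda-T-n^r} it is enough to bound $\lambda(\mathcal{H}^{\ast})$ by $\lambda(K_{3(t+1)}^{3-})$. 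The heart of the matter is therefore to prove a Lagrangian-stability-type claim: any $\mathcal{M}_t$-free $3$-graph $\mathcal{G}$ that is symmetrized (equivalently, has no two non-equivalent non-adjacent vertices) satisfies $\lambda(\mathcal{G})\le\lambda(K_{3(t+1)}^{3-})$. To see this I would argue that if $v(\mathcal{G})\le (3t+3)^2$ then, since $\mathcal{G}$ is $\mathcal{M}_t$-free and $\mathcal{M}_t$ is defined as an intersection over $i\in[t]$ of the families $\mathcal{F}_{(3t+3)^2}(\mathcal{G}_n^i)$, $\mathcal{G}$ embeds into a blowup of some $\mathcal{G}_i$; then $\lambda(\mathcal{G})\le\lambda(\mathcal{G}_i)=\lambda(K_{3(t+1)}^{3-})$ by monotonicity of the Lagrangian under taking subgraphs of blowups together with Lemma~\ref{Lemma:Gi-2-covered-lag}. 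If $v(\mathcal{G})>(3t+3)^2$ one uses the fact that the Lagrangian is attained on a subgraph whose vertex support realizes an optimal weighting, and such an optimal subgraph has bounded size — more precisely, a minimal subgraph of $\mathcal{G}$ with the same Lagrangian has at most $\binom{v}{2}$-many vertices after contracting equivalence classes, which one can bound by $(3t+3)^2$ using the $2$-covered structure forced by symmetrization (this is exactly the role played by the exponent $(3t+3)^2$ in the definition \eqref{M-choice}, and by the remark on $\mathcal{K}_{3t+6}^3$).

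I expect the main obstacle to be the last point: controlling the size of a Lagrangian-optimal symmetrized subgraph so that the membership in $\mathcal{F}_{(3t+3)^2}(\mathcal{G}_n^i)$ can actually be exploited. The clean way to handle this is to take a vertex-minimal induced subgraph $\mathcal{G}'\subseteq\mathcal{G}$ with $\lambda(\mathcal{G}')=\lambda(\mathcal{G})$; by minimality every vertex of $\mathcal{G}'$ lies in the support of an optimal weight vector, and by a standard argument (if two vertices of $\mathcal{G}'$ were non-adjacent one could shift weight between them without decreasing $p_{\mathcal{G}'}$ and then delete one of them, contradicting minimality) $\mathcal{G}'$ is $2$-covered. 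A $2$-covered $3$-graph on $N$ vertices has at least $\binom{N}{2}/\binom{3}{2}$ edges, but more to the point, being $2$-covered with all pairs covered means $\mathcal{G}'$ contains a member of $\mathcal{K}_N^3$ on its whole vertex set; combined with $\mathcal{G}$ being $\mathcal{M}_t$-free and the remark following \eqref{M-choice} showing $\mathcal{K}_{3t+6}^3\cap\mathcal{F}_\infty(\mathcal{G}_n^i)\subseteq\mathcal{M}_t$, this forces $\mathcal{G}'$ (hence $\mathcal{G}$) to have a homomorphism into some $\mathcal{G}_i$ as soon as $N\ge 3t+6$, giving $\lambda(\mathcal{G})=\lambda(\mathcal{G}')\le\lambda(\mathcal{G}_i)=\lambda(K_{3(t+1)}^{3-})$; and if $N\le 3t+5$ then $v(\mathcal{G}')\le(3t+6)^2$ directly lands $\mathcal{G}'$ inside the comparison family. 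Putting the two bounds together yields $\pi(\mathcal{M}_t)=6\lambda(K_{3(t+1)}^{3-})$, which is \ref{THM:main-sec1.1}(a) via Lemma~\ref{LEMMA:lambda(K-)-is-irrational}.
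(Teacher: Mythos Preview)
Your approach is correct and follows the paper's proof almost exactly: lower bound via the blowups $\mathcal{G}_n^i$, upper bound via blowup-invariance and Theorem~\ref{THEOREM:ex(n,F)=max-blowup-of-H}, then bound the Lagrangian of the quotient by showing it is $\mathcal{G}_i$-colorable for some $i$. Two remarks. First, the detour through a vertex-minimal Lagrangian-optimal subgraph is unnecessary: in a \emph{symmetrized} $3$-graph, any two non-equivalent vertices lie in a common edge by definition, so the quotient $\mathcal{H}^\ast$ itself is already $2$-covered; the paper exploits this directly, immediately obtaining $|T|\le 3t+5$ from $\mathcal{K}_{3t+6}^3\subseteq\mathcal{M}_t$ and then $\mathcal{T}\notin\mathcal{M}_t$ yields the homomorphism. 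Second, your sentence ``this forces $\mathcal{G}'$ \ldots\ to have a homomorphism into some $\mathcal{G}_i$ as soon as $N\ge 3t+6$'' is stated backwards: $N\ge 3t+6$ yields a \emph{contradiction} (a member of $\mathcal{K}_{3t+6}^3\subseteq\mathcal{M}_t$ inside an $\mathcal{M}_t$-free graph), which is what forces $N\le 3t+5$, and it is only then that the homomorphism follows. Your surrounding text makes the intended logic recoverable, but as written that clause is incorrect.
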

\begin{proof}
Note that each $\mathcal{G}_n^i$ is  $\mathcal{M}_t$-free. By \eqref{number-edge-Gni}, we have  $\pi(\mathcal{M})\ge6\lambda(K_{3(t+1)}^{3-})$. Now we prove the other side. Let  $\mathcal{H}$ is a  symmetrized $\mathcal{M}_t$-free $3$-graph on $n$ vertices. Combining Lemma \ref{Lemma:blowup in} and Theorem~\ref{THEOREM:ex(n,F)=max-blowup-of-H}, it suffices to prove that $|\mathcal{H}| \leq \lambda(K_{3(t+1)}^{3-})n^3$. Let $T \subseteq V(\mathcal{H})$ be the set that contains exactly one vertex from each equivalence class of $\mathcal{H}$ and let $\mathcal{T}=\mathcal{H}[T]$. Then $\mathcal{T}$ is 2-covered in  $\mathcal{H}$ and  $\mathcal{H}$ is a blowup of $\mathcal{T}$.   Note that $|T|\le 3t+5$, since otherwise $\mathcal{H}$  contains a copy of $F$ for some $F\in \mathcal{K}_{3t+6}^3$ whose  2-covered set is  contained in $T$, a contradiction.  Since $\mathcal{T}$ is 2-covered  and $\mathcal{H}$ does not contain any member of $\mathcal{M}_t$ as a subgraph, $\mathcal{T} \subseteq \mathcal{G}_{i_0}$  for some ${i_0}\in [t]$. It follows from Lemma~\ref{Lemma:Gi-2-covered-lag} that $\lambda(\mathcal{T}) \leq \lambda(\mathcal{G}_{i_0})=\lambda(K_{3t+3}^{3-})$. Thus, by Lemma \ref{LEMMA:|H|<=lambda-T-n^r}, $|\mathcal{H}|\le \lambda(\mathcal{T})n^3\le \lambda(K_{3(t+1)}^{3-})n^3$.
\end{proof}

\section{Stability}\label{stability}

In this section we always suppose that $\mathcal{M}_t =\bigcap_{i\in [t]} \mathcal{F}_{(3t+6)^2}(\mathcal{G}_n^i)$. First, we prove Theorem \ref{THM:main-sec1.1} (b) using the machinery provided in~\cite{LMR2}.

\begin{definition}[Symmetrized-stability]\label{DFN:sym-stability}
Let $\mathcal{F}$ be a family of $r$-graphs and let $\mathfrak{H}$ be a class of $\mathcal{F}$-free $r$-graphs. We say that $\mathcal{F}$ is symmetrized-stable with respect to $\mathfrak{H}$ if there exist $\epsilon > 0$ and $N_0$ such that every symmetrized $\mathcal{F}$-free $r$-graphs $\mathcal{H}$ on $n \geq N_0$ vertices with $\delta(\mathcal{H})\ge \bigl(\pi(\mathcal{F})/(r-1)!-\epsilon\bigr)n^{r-1}$ is a subgraph of a member of $\mathfrak{H}$.
\end{definition}

\begin{definition}[Vertex-extendibility]\label{DFN:vertex-extendable}
Let $\mathcal{F}$ be a family of $r$-graphs and let $\mathfrak{H}$ be a class of $\mathcal{F}$-free $r$-graphs.
We say that $\mathcal{F}$ is \emph{vertex-extendable}
with respect to $\mathfrak{H}$ if there exist $\zeta>0$ and $N_0\in\mathbb{N}$ such that
for every $\mathcal{F}$-free $r$-graph $\mathcal{H}$ on $n\ge N_0$
vertices satisfying $\delta(\mathcal{H})\ge \bigl(\pi(\mathcal{F})/(r-1)!-\zeta\bigr)n^{r-1}$
the following holds: if $\mathcal{H}-v$ is a subgraph of a member of $\mathfrak{H}$ for some
vertex $v\in V(\mathcal{H})$, then $\mathcal{H}$ is a subgraph of a member of $\mathfrak{H}$ as well.
\end{definition}

In~\cite{LMR2}, Liu, Mubayi and Reiher developed a machinery that reduces the proof of stability of certain families $\mathcal{F}$ to the simpler question of checking that an $\mathcal{F}$-free hypergraph $\mathcal{H}$ with large minimum degree is vertex-extendable.

\begin{theorem}[Liu, Mubayi and Reiher \cite{LMR2}]\label{THM:Psi-trick:G-extendable-implies-degree-stability}
Suppose that $\mathcal{F}$ is a blowup-invariant nondegenerate family of $r$-graphs and that
$\mathfrak{H}$ is a hereditary class of $\mathcal{F}$-free $r$-graphs.
If $\mathcal{F}$ is symmetrized-stable and vertex-extendable with respect to $\mathfrak{H}$, then the following statement holds.
There exist $\varepsilon>0$ and $N_0$ such that every $\mathcal{F}$-free $r$-graph on $n\ge N_0$ vertices with minimum
degree at least $\left(\pi(\mathcal{F})/(r-1)!-\varepsilon\right)n^{r-1}$ is contained in $\mathfrak{H}$.
\end{theorem}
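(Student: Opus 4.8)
The plan is to interpolate between an arbitrary $\mathcal{F}$-free $r$-graph $\mathcal{H}$ of large minimum degree and a \emph{symmetrized} one through a chain of Zykov-type symmetrization steps, to settle the symmetrized endpoint by the definition of symmetrized-stability, and then to transport membership in $\mathfrak{H}$ back along the chain using that $\mathfrak{H}$ is hereditary together with vertex-extendability. Write $\gamma:=\pi(\mathcal{F})/(r-1)!$. I would begin with two routine observations. Since $\delta(\mathcal{H})\ge(\gamma-\varepsilon)n^{r-1}$ gives $|\mathcal{H}|=\frac{1}{r}\sum_{v}d(v)\ge\frac{\gamma-\varepsilon}{r}n^{r}$, any such $\mathcal{H}$ is within $O(\varepsilon)n^{r}$ of $\mathrm{ex}(n,\mathcal{F})$, so by Theorem~\ref{THEOREM:ex(n,F)=max-blowup-of-H} and Lemma~\ref{LEMMA:|H|<=lambda-T-n^r} every symmetrized $r$-graph that will appear in the chain is itself near-extremal. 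And because $\mathcal{F}$ is blowup-invariant, $\mathcal{H}$ is automatically $\mathcal{F}$-hom-free; this is exactly what makes the basic move — delete every edge of $\mathcal{H}$ through a vertex $u$ and then give $u$ the same link as a chosen non-adjacent vertex $v$ — preserve $\mathcal{F}$-freeness, since a copy of an $F\in\mathcal{F}$ in the modified graph using $u$ would, after identifying $u$ with $v$, yield a homomorphic image of $F$ in $\mathcal{H}$ ($u$ and $v$ being non-adjacent in $\mathcal{H}$ too). The parameters $\varepsilon$ and $N_0$ will be fixed at the end so as to dominate the finitely many thresholds furnished by symmetrized-stability and vertex-extendability.

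The argument proper is an induction on $n$. If $\mathcal{H}$ is symmetrized, then by symmetrized-stability (Definition~\ref{DFN:sym-stability}) $\mathcal{H}$ is a subgraph of a member of $\mathfrak{H}$, hence — $\mathfrak{H}$ being hereditary — $\mathcal{H}\in\mathfrak{H}$; this is the base case. Otherwise, pick non-adjacent, non-equivalent vertices $u,v$ and apply the basic move in the direction that does not decrease the number of edges, obtaining an $\mathcal{F}$-free $\mathcal{H}'$ on the same vertex set in which $u$ is now equivalent to $v$. Two features make this useful: (i) the edge count is nondecreasing, and — arranging, exactly as in the proof of Theorem~\ref{THEOREM:ex(n,F)=max-blowup-of-H}, that an auxiliary potential strictly drops whenever the edge count stays the same — iterating the move reaches a symmetrized $\mathcal{F}$-free graph after finitely many steps; (ii) as only the link of $u$ was changed, $\mathcal{H}'-u=\mathcal{H}-u$. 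Feature (ii) is the transport device: if the symmetrized terminal graph of the chain lies in $\mathfrak{H}$, then walking the chain backwards and using at each step $\mathcal{H}_{j}-u_{j}=\mathcal{H}_{j+1}-u_{j}\in\mathfrak{H}$ (heredity) reduces $\mathcal{H}_{j}\in\mathfrak{H}$ to a single application of vertex-extendability (Definition~\ref{DFN:vertex-extendable}) to the pair $(\mathcal{H}_{j},u_{j})$.

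The step I expect to be the main obstacle is reconciling all of this with the minimum-degree condition. Vertex-extendability needs $\delta(\mathcal{H}_{j})\ge(\gamma-\zeta)n^{r-1}$, and symmetrized-stability needs a comparable bound at the symmetrized endpoint, but a single symmetrization move need not preserve such a bound — installing the link of $v$ at $u$ changes the degree of every third vertex $w$ by $d_{\mathcal{H}}(w,v)-d_{\mathcal{H}}(w,u)$, possibly by a large negative amount — and, when $\gamma<1/(r-1)!$, one also cannot in general just delete a vertex of $\mathcal{H}$ and remain in the large-minimum-degree regime (the codegrees may be too large). Resolving this is where the real work lies: one must organize the two inductions — the outer one on $n$ and the inner one on the symmetrization potential — so that symmetrized-stability and vertex-extendability are each invoked only at graphs where the required minimum-degree bound is genuinely available (for vertex-extendability, at the top of the induction, with the deletion $\mathcal{H}-u$ passed to the outer hypothesis after checking it still has large enough minimum degree; for symmetrized-stability, at the symmetrized endpoint, whose minimum degree must be shown to be inherited from that of $\mathcal{H}$), and then track the precise chain of inequalities among $\varepsilon$, $N_0$, $\zeta$, and the symmetrized-stability constants. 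The ideas above are the skeleton; this bookkeeping is the substance.
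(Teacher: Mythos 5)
This theorem is not proved in the paper at all: it is quoted verbatim from~\cite{LMR2} (note the label ``$\Psi$-trick'' the authors attach to it), so there is no in-paper argument to compare with; your proposal has to be judged on its own. Judged that way, it has a genuine gap, and it is precisely the point you set aside as ``bookkeeping''. Your transport device applies vertex-extendability (Definition~\ref{DFN:vertex-extendable}) to \emph{every intermediate graph} $\mathcal{H}_j$ of a Zykov symmetrization chain, and symmetrized-stability (Definition~\ref{DFN:sym-stability}) to its endpoint; both hypotheses require minimum degree at least $\bigl(\pi(\mathcal{F})/(r-1)!-\zeta\bigr)n^{r-1}$, and nothing in your plan supplies this. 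A single symmetrization step changes the degree of a third vertex $w$ by $d(w,v_j)-d(w,u_j)$, i.e.\ possibly by $-\binom{n-2}{r-2}$, and the chain can have $\Omega(n)$ (in general even more) steps, so the minimum degree of the intermediate graphs and of the symmetrized endpoint can deteriorate by a constant proportion of $n^{r-1}$ --- far beyond what any fixed $\zeta$ or the symmetrized-stability constant tolerates. Your fallback, an outer induction on $n$ that deletes a vertex, applies the hypothesis to $\mathcal{H}-u$ and then extends, also does not close: each deletion costs $\Theta(1/n)$ in the degree constant, and these losses sum to $\Theta(\log n)$ over the range of the induction, which a fixed $\varepsilon$ cannot absorb. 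So this is not bookkeeping; the walk-back-along-the-chain scheme cannot be completed as stated and needs a different idea.

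The idea that makes the reduction work in~\cite{LMR2} is to never invoke vertex-extendability at symmetrization intermediates. Roughly, symmetrization (organized via their ``$\Psi$-trick'') together with symmetrized-stability and blowup-invariance is used only to show that after discarding a small exceptional set $Z$ of vertices, $|Z|=o(n)$, the induced subgraph $\mathcal{H}-Z$ is a subgraph of a member of $\mathfrak{H}$; the vertices of $Z$ are then restored one at a time, and vertex-extendability is applied only to induced subgraphs $\mathcal{H}[V\setminus Z']$ with $Z'\subseteq Z$. These are induced subgraphs of the original $\mathcal{H}$ on $(1-o(1))n$ vertices, so they inherit the minimum-degree hypothesis automatically (losing only $|Z|\cdot n^{r-2}=o(n^{r-1})$), which is exactly the control your chain lacks. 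Producing that small exceptional set is the substantive step missing from your sketch; the one-vertex-at-a-time extension you describe is sound, but only once it is anchored to induced subgraphs of $\mathcal{H}$ rather than to the symmetrization chain.
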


In order to find some specific structures,  we need the following lemma given by Liu, Mubayi and Reiher \cite{LMR1}.
\begin{lemma}[\cite{LMR1}]\label{LEMMA:greedily-embedding-Gi}
Fix a real $\eta \in (0, 1)$ and integers $m, n\ge 1$.
Let $\mathcal{G}$ be a $3$-graph with the vertex set~$[m]$ and let $\mathcal{H}$ be a further $3$-graph
with $v(\mathcal{H})=n$.
Consider a vertex partition $V(\mathcal{H}) = \bigcup_{i\in[m]}V_i$ and the associated
blow-up $\widehat{\mathcal{G}} = \mathcal{G}[V_1,\ldots,V_{m}]$ of $\mathcal{G}$.
If two sets $T \subseteq [m]$ and $S\subset V$
have the properties
\begin{enumerate}[label=(\alph*)]
\item\label{it:47a} $|V_{j}| \ge (|S|+1)|T|\eta^{1/3} n$  for all $j \in T$,
\item\label{it:47b} $|\mathcal{H}[V_{j_1},V_{j_2},V_{j_3}]| \ge |\widehat{\mathcal{G}}[V_{j_1},V_{j_2},V_{j_3}]|
		- \eta n^3$ for all $\{j_1,j_2,j_3\} \in \binom{T}{3}$, and
\item\label{it:47c} $|L_{\mathcal{H}}(v)[V_{j_1},V_{j_2}]| \ge |L_{\widehat{\mathcal{G}}}(v)[V_{j_1},V_{j_2}]|
		- \eta n^2$ for all $v\in S$ and $\{j_1,j_2\} \in \binom{T}{2}$,
\end{enumerate}
then there exists a selection of vertices $u_j\in V_j\setminus S$ for all $j\in [T]$
such that $U = \{u_j\colon j\in T\}$ satisfies
$\widehat{\mathcal{G}}[U] \subseteq \mathcal{H}[U]$ and
$L_{\widehat{\mathcal{G}}}(v)[U] \subseteq L_{\mathcal{H}}(v)[U]$ for all $v\in S$. In particular, if $\mathcal{H} \subseteq \widehat{\mathcal{G}}$,
then $\widehat{\mathcal{G}}[U] = \mathcal{H}[U]$ and
$L_{\widehat{\mathcal{G}}}(v)[U] = L_{\mathcal{H}}(v)[U]$ for all $v\in S$.
\end{lemma}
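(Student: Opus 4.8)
The plan is to build the set $U$ greedily, processing the indices of $T$ one at a time, and to maintain at each stage a large reservoir of admissible vertices inside each remaining part. Order the elements of $T$ arbitrarily as $j_1,\dots,j_{|T|}$. I will select $u_{j_1},\dots,u_{j_{|T|}}$ in this order, where $u_{j_\ell}\in V_{j_\ell}\setminus S$, and will track for each not-yet-processed index $j$ the set $W_j\subseteq V_j\setminus S$ of vertices that are still ``good'' with respect to all choices made so far. The key invariant is that after $\ell$ steps, $|W_j|$ is still at least $|V_j|$ minus a controlled error, so that $W_j$ is nonempty (indeed large) when its turn comes. Good means: for the already-chosen vertices $u_{j_1},\dots,u_{j_\ell}$, the relevant link pairs and triples behave as in $\widehat{\mathcal G}$, and moreover the pair-links $L_{\mathcal H}(v)$ for the fixed vertices $v\in S$ also behave correctly.

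The core estimate is a counting/averaging argument: hypotheses \ref{it:47b} and \ref{it:47c} say that in each relevant triple of parts the 3-graph $\mathcal H$ misses at most $\eta n^3$ of the edges of $\widehat{\mathcal G}$, and in each relevant pair of parts each link $L_{\mathcal H}(v)$, $v\in S$, misses at most $\eta n^2$ of the edges of $L_{\widehat{\mathcal G}}(v)$. First I would dispose of $S$: for each $v\in S$ and each pair $\{j_a,j_b\}\in\binom{T}{2}$, the number of pairs $(x,y)\in V_{j_a}\times V_{j_b}$ with $\{v\}\cup\{x,y\}\in\widehat{\mathcal G}\setminus\mathcal H$ is at most $\eta n^2$, so by Markov the number of $x\in V_{j_a}$ that are ``$v$-bad for the direction $j_b$'' (i.e.\ have more than $\sqrt{\eta}\,n$ bad partners $y$) is at most $\sqrt{\eta}\,n$. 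Summing over the at most $|S|\binom{|T|}{2}$ relevant $(v,j_b)$ pairs removes at most $|S|\,|T|^2\sqrt{\eta}\,n$ vertices from each $V_{j_a}$; hypothesis \ref{it:47a} is exactly calibrated so that a positive fraction survives. Then, when $u_{j_\ell}$ is chosen from the surviving reservoir $W_{j_\ell}$, I restrict each later $W_j$ to those $x$ that form a $\widehat{\mathcal G}$-consistent triple with $u_{j_\ell}$ together with every previously chosen $u_{j_{\ell'}}$, and consistent link-pairs with every $v\in S$; again \ref{it:47b} and an application of Markov show only $O(\sqrt\eta\, n)$ vertices are lost per constraint, and the total number of constraints accumulated over all $|T|$ steps is $O(|T|^2)$ triple-constraints plus $O(|S|\,|T|)$ link-constraints, so the reservoirs stay of size $\gtrsim |V_j| - |S||T|^2\sqrt\eta\,n \ge |V_j|/2$ throughout by \ref{it:47a}. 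This produces $U$ with $\widehat{\mathcal G}[U]\subseteq\mathcal H[U]$ and $L_{\widehat{\mathcal G}}(v)[U]\subseteq L_{\mathcal H}(v)[U]$ for all $v\in S$; the final ``in particular'' clause is immediate, since $\mathcal H\subseteq\widehat{\mathcal G}$ forces the reverse inclusions.

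The main obstacle is purely bookkeeping rather than conceptual: one has to be careful that the ``defect'' quantities $\eta n^3$ and $\eta n^2$ in \ref{it:47b}--\ref{it:47c} are defects relative to the \emph{whole} part sizes $|V_j|$ (which may themselves be as small as the threshold in \ref{it:47a}) and not relative to $n^3,n^2$ with a hidden smaller constant, so the repeated Markov steps must be arranged so that the cumulative loss is genuinely bounded by a constant (depending only on $|S|,|T|,\eta$) times $n$, and this constant must be dominated by the slack in \ref{it:47a}. Keeping the exponent $\eta^{1/3}$ consistent — it appears because applying Markov to an $\eta n^3$ (resp.\ $\eta n^2$) defect naturally produces a $\sqrt\eta\,n$ (resp.\ $\eta^{2/3}n$) loss and one wants a single clean bound — is the one place where a sign error or a mis-set threshold would break the induction, so I would state the surviving-reservoir invariant explicitly with its numerical constant and verify it step by step.
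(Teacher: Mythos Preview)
The paper does not prove this lemma at all: it is quoted verbatim from \cite{LMR1} and used as a black box. So there is no ``paper's own proof'' to compare against. Your greedy-with-averaging scheme is exactly the natural (and, as far as I know, the standard) way to establish such a statement, and the outline is sound: process the indices of $T$ one by one, maintain shrinking reservoirs $W_j\subseteq V_j\setminus S$, and at each step discard the vertices whose link-defect (with respect to already chosen vertices and to the fixed set $S$) exceeds a threshold, bounding the number discarded via an averaging/Markov argument against the global defects in \ref{it:47b}--\ref{it:47c}.

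One small correction to your bookkeeping: in the final paragraph you write that Markov on an $\eta n^2$ defect ``naturally produces an $\eta^{2/3}n$ loss'', but in fact (as you computed correctly two paragraphs earlier) it gives a $\sqrt{\eta}\,n$ loss; the $\eta^{1/3}$ exponent genuinely comes from the \emph{triple} constraint \ref{it:47b}, where one applies Markov twice (once to pass from an $\eta n^3$ defect on triples to an $\eta^{2/3}n^2$ link-defect, losing $\eta^{1/3}n$ vertices, and then again to pass to an $\eta^{1/3}n$ codegree-defect, losing another $\eta^{1/3}n$). With thresholds set uniformly at $\eta^{1/3}n$, each of the at most $(|S|+|T|)$ link/codegree constraints imposed when choosing $u_{j_\ell}$ rules out at most $\eta^{1/3}n$ candidates, and the total over all steps stays below $(|S|+1)|T|\eta^{1/3}n\le|V_{j_\ell}|$ by \ref{it:47a}. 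So the invariant you describe does hold once the thresholds are aligned; the rest of your plan goes through.
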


\begin{proof}[Proof of Theorem \ref{THM:main-sec1.1} (b)]
Let
\begin{align}\label{frak-H}
\mathfrak{H}=\{\text{3-graph }\mathcal{H}: \mathcal{H} \text{ is } \mathcal{G}_i \text{-colorable}  \text{ for some } i\in [t]\}. 
\end{align}

By Theorem \ref{THM:Psi-trick:G-extendable-implies-degree-stability}, it suffices to show that $\mathcal{M}_t$ is symmetrized-stable and vertex-extendable with respect to $\mathfrak{H}$. Suppose that  $\mathcal{H}$ is a symmetrized $\mathcal{M}_t$-free $3$-graphs  on $n$ vertices. Let $T \subseteq V(\mathcal{H})$ be a set that contains exactly one vertex from each equivalence class of $\mathcal{H}$ and $\mathcal{T}=\mathcal{H}[T]$. Then  $\mathcal{H}$ is a blowup of $\mathcal{T}$. As in the proof of Theorem \ref{turan-number-M}, if $|T|> 3t+5$, then $\mathcal{H}$ contains a copy of $F\in \mathcal{K}_{3t+6}^3$, a contradiction. Otherwise, $\mathcal{T} \subseteq \mathcal{G}_{i_0}$ for some $i_0\in [t]$. This means $\mathcal{H}\in \mathfrak{H}$. Thus,  $\mathcal{M}_t$ is symmetrized-stable.

In the following, we prove  $\mathcal{M}_t$ is vertex-extendable. Let $\epsilon >0$ be a sufficiently small constant and $N_0$ be a sufficiently large integer. Suppose that $\mathcal{H}$ is a  $\mathcal{M}_t$-free $3$-graph on $n\ge N_0$ vertices with
\begin{align}\label{ALIGN:delta(H)}
\delta(\mathcal{H})\ge \left(3\lambda(K_{3t+3}^{3-})-\epsilon\right)n^2.
\end{align}

By Theorem \ref{THM:Psi-trick:G-extendable-implies-degree-stability}, it suffices to show that if there is a vertex $v$ such that $\mathcal{H}'=\mathcal{H}-v$ is $\mathcal{G}_{i_0}$-colorable for some $i_0\in [t]$, then $\mathcal{H}$ is also $\mathcal{G}_{i_0}$-colorable. We only consider the case that  $i_0=1$. The other cases are similar. Let
\[
\mathcal{P}=\{V_1, V_{1'}, V_2, V_{2'},V_3, \ldots, V_{3t+3}\}
\]
be a partition of $V(\mathcal{H}')$ such that every edge in $\mathcal{H}'$ intersects  at most one vertex in each part, and there is no the following four types edges :
\begin{enumerate}
\item [(1)] $E_1=\{xyz : x\in V_i, y\in V_{i'}, z\in V_3, \text{ for } i\in [2]\}$,
\item [(2)] $E_2=\{xyz : x\in V_1, y\in V_2, z\in V_4 \text{ or } x\in V_{1'}, y\in V_{2'}, z\in V_4\}$,
\item [(3)] $E_3=\{xyz : x\in V_1, y\in V_{2'}, z\in V_i, \text{ or } x\in V_{1'}, y\in V_2, z\in V_i \text{ for } i\in[5, 3t+3]\}$, and
\item [(4)] $E_4=\{xyz : x\in V_{3t+1}, y\in V_{3t+2}, z\in V_{3t+3}\}$.
\end{enumerate}
Define $\widehat{\mathcal{G}}_1=\mathcal{G}_1[V_1, V_{1'}, V_2, V_{2'},V_3, \ldots, V_{3t+3}]$ to be the associated blowup of $\mathcal{G}_1$. 
Suppose that
$$Z(\mathcal{G}_1)=\left\{a/2,a/2,a/2,a/2, \underbrace{a,\ldots,a}_{3t-2},b,b,b\right\},$$
where $a=\frac{2}{3t+3+\sqrt{9t^2+18t-3}}$ and $b=\frac{1}{3}-ta$.
Then, as in the proof of Lemma \ref{LEMMA:lambda(K-)-is-irrational}, we have
\begin{align}\label{P-3t+3-function-a}
\lambda(\mathcal{G}_1)=p_{K_{3(t+1)}^{3-}}\left(a/2,a/2,a/2,a/2, \underbrace{a,\ldots,a}_{3t-2},b,b,b\right)=\frac{t}{2}\left(2a^3-(3t+3)a^2+2a\right),
\end{align}
and
\begin{align}\label{a2=0}
6a^2+2-2a(3t+3)=0.
\end{align}

The following claim gives a rough structure of  $\mathcal{H}'$. 
\begin{claim}\label{CLAIM:five-prop}
There exists a constant $c:=c(t)$ such that the following holds.
\begin{itemize}
  \item [(a)] $\left||V_i|-an/2\right|\leq c \epsilon^{1/2} n$ for $i\in \{1,2,1',2'\}$.
  \item [(b)] $||V_i|-an|\leq c \epsilon^{1/2} n$ for $i\in [3, 3t]$.
  \item [(c)] $||V_i|-bn|\leq c \epsilon^{1/2} n$ for $i\in [3t+1, 3t+3]$.
  \item [(d)] If $i\in \{1',2'\}\cup [3t+3]$ and $u\in V(\mathcal{H}')\setminus V_{i}$, then $|V_{i}\setminus N_{\mathcal{H}}(u)|\leq c\epsilon^{1/2} n$.
\end{itemize}
\end{claim}
\begin{proof}[Proof of Claim \ref{CLAIM:five-prop}]
((a)-(c)). For $i\in \{1',2'\}\cup [3t+3]$ let  $x_i=|V_i|/(n-1)$. Let $y_1=y_2=(x_1+x_1'+x_2+x_2')/4$ and $y_i=x_i$ for $i\in [3, 3t]$. 
By \eqref{ALIGN:delta(H)}, we have
\begin{align}\label{ALIGN:number-H'}
|\mathcal{H}'|\ge  \frac{\left(3\lambda(K_{3t+3}^{3-})-\epsilon\right)n^2\times n}{3}-\binom{n}{2}\ge (\lambda(K_{3t+3}^{3-})-\epsilon)n^3,
\end{align}
which means that
\begin{align}\label{ALIGN:larg-H'-small}
p_{\mathcal{G}_1}(x_1,x_{1'},x_2,x_{2'},x_3,\ldots,x_{3t+3})\ge \lambda(K_{3t+3}^{3-})-\epsilon.
\end{align}

On the other hand, let
\[
q=\sum_{i=1}^{3t} y_i \,\,\,\,\,\text{ and }\,\,\,\,\,x=\frac{q}{3t}.
\]
First, we bound $x$. As in the proof Lemma \ref{LEMMA:lambda(K-)-is-irrational} and Theorem \ref{mix-crossed-blowup-lag}, we have
\begin{align}\label{Lag-x<y}
p_{\mathcal{G}_1}(x_1,x_{1'},x_2,x_{2'},x_3,\ldots,x_{3t+3})\leq&p_{K_{3t+3}^{3-}}(y_1,y_2,y_3,\ldots,y_{3t+3})\\
\leq & \frac{t}{2}\left(2 x^3-(3t+3) x^2 +2x \right).\notag
\end{align}
Let $$f(x)=2x^3-(3t+3)x^2+2x.$$
Then $f(x)$ increases monotonically on $[0,a]$, decreases monotonically on $(a,1]$, and $f(a)=2\lambda(K_{3t+3}^{3-})/t$. By \eqref{ALIGN:larg-H'-small}, we have
\begin{align}\label{fa-fx-bound}
f(a)-f(x)\le \frac{2\epsilon}{t}.
\end{align}

Let $x=a+\sigma$, where $\sigma\in [-a, 1-a]$. Then by \eqref{a2=0}, we have
\begin{align*}
f(a)-f(x)&=2a^3-(3t+3)a^2+2a-2(a+\sigma)^3+(3t+3)(a+\sigma)^2-2(a+\sigma)\\
&=-\left(6a^2-2(3t+3)a+2\right)\sigma+(3t+3-6a)\sigma^2-2\sigma^3\\
&=(3t+3-6a)\sigma^2-2\sigma^3\\
&\ge (3t+1-6a)\sigma^2.
\end{align*}
This together with \eqref{fa-fx-bound} yields that
\begin{align}\label{bound-average-a-x-epsilon}
|\sigma|\le \sqrt{\frac{2}{t(3t+1-6a)}}\epsilon^{1/2}<\frac{2}{t}\epsilon^{1/2}.
\end{align}
This implies that
\begin{align}\label{bound-q-epsilon}
\left|q-3ta\right|<6\epsilon^{1/2} \text{ and } |y_{3t+1}+y_{3t+2}+y_{3t+3}-3b|< 6\epsilon^{1/2}.
\end{align}

Next we bound each $y_i$ for $i\in [3t+3]$ through bounding $y_1$ as an example. Recall that $x=\frac{q}{3t}$ and let $y_1=x+(3t-1)\alpha$. Then $\frac{q-y_1}{3t-1}=x-\alpha$.  Let $\beta=\frac{1-q}{3}$.
Note that
\begin{align*}
p_{K_{3t+3}^{3-}}(y_1,y_2,y_3,\ldots,y_{3t+3})=& \sum_{\{i, j, k\}\in \binom{[3t]}{3}}y_iy_jy_k+\left(y_{3t+1}+y_{3t+2}+y_{3t+3}\right)\sum_{\{i, j\}\in \binom{[3t]}{2}}y_iy_j \notag\\
&+(y_{3t+1}y_{3t+2}+y_{3t+1}y_{3t+3}+y_{3t+2}y_{3t+3})\sum_{j\in [3t]}y_j\notag\\
\leq & y_1\binom{3t-1}{2}\left(x-\alpha\right)^2+\binom{3t-1}{3}\left(x-\alpha\right)^3\notag \\
&+(1-q)\left(y_1(q-y_1)+ \binom{3t-1}{2}\left(x-\alpha\right)^2\right)+q\binom{3}{2}\beta^2\\
=&p_{K_{3t+3}^{3-}}\left(y_1,x-\alpha,\ldots,x-\alpha,\beta,\beta,\beta\right).
\end{align*}
On the other hand, we know that
\[
\lambda(K_{3t+3}^{3-})\ge  p_{K_{3t+3}^{3-}}(x,\ldots,x,\beta,\beta,\beta).
\]
So, by \eqref{ALIGN:larg-H'-small}, we have
\begin{align}\label{Pq/3t>Py1}
p_0=p_{K_{3t+3}^{3-}}(x,\ldots,x,\beta,\beta,\beta)- p_{K_{3t+3}^{3-}}(y_1,x-\alpha,\ldots,x-\alpha,\beta,\beta,\beta)\le \epsilon.
\end{align}
Note that 
\begin{align*}
p_0&=\binom{3t}{3}x^3-y_1\binom{3t-1}{2}(x-\alpha)^2-\binom{3t-1}{3}\left(x-\alpha\right)^3   \\
&+(1-q)\left(\binom{3t}{2}x^2-y_1(1-y_1)-\binom{3t-1}{2}(x-\alpha)^2\right). 
\end{align*}
Considering the number of edges of a complete $3t$-partite graph with $qn$ vertices, we have 
\[
y_1(1-y_1)+\binom{3t-1}{2}(x-\alpha)^2-\binom{3t}{3}x^2\leq 0.
\]
So, by  \eqref{Pq/3t>Py1}, we know 
\begin{align}
\binom{3t}{3}x^3-y_1\binom{3t-1}{2}(x-\alpha)^2-\binom{3t-1}{3}\left(x-\alpha\right)^3\leq \epsilon.\notag
\end{align}
Notice that
\begin{align*}
&\binom{3t}{3}x^3-y_1\binom{3t-1}{2}(x-\alpha)^2-\binom{3t-1}{3}\left(x-\alpha\right)^3\\
=& \binom{3t-1}{2}q\alpha^2-2\binom{3t}{3}\alpha^3\\
\geq & \frac{(3t-2)^2}{2}q\alpha^2.
\end{align*}

By \eqref{bound-q-epsilon}, we have
\[
|\alpha|\leq \frac{1}{3t-2}\sqrt{\frac{2}{q}}\epsilon^{1/2}\leq \frac{1}{3t-2}\sqrt{\frac{2}{3ta-6\epsilon^{1/2}}}\epsilon^{1/2}< \frac{1}{3t-2}\sqrt{\frac{2}{(3t-1)a}}\epsilon^{1/2},
\]
which together with \eqref{bound-average-a-x-epsilon} yields that 
\[
|y_1-a|\le |y_1-x|+|x-a|\le \left(\frac{2}{t}+\frac{3t-1}{3t-2}\sqrt{\frac{2}{(3t-1)a}}\right)\epsilon^{1/2}. 
\]

Using  similar arguments, we can show that there exists a constant $c_1:=c_1(t)>0$ such that  $||V_i|-an|\leq c_1 \epsilon^{1/2} n$ for $i\in [1, 3t]$, and  $||V_i|-bn|\leq c_1 \epsilon^{1/2} n$ for $i\in [3t+1, 3t+3]$. This proves (b) and (c).

Now we prove (a).  Let $ x_{1,2}=\frac{x_1+x_1'+x_2+x_2'}{4}$ and 
\[
\psi(i,j, k,\ell)=(x_i+x_j)(x_k+x_{\ell})\,\, \text{ for } \,\,\{i,j,k,\ell\}=\{1,1',2,2'\}.
\] 
By \eqref{ALIGN:larg-H'-small}, we have
\begin{align}\label{upper-bound-1-2-1'-2'}
\epsilon \ge &\lambda(K_{3t+3}^{3-})-p_{\mathcal{G}_1}(x_1,x_{1'},x_2,x_{2'},x_3,\ldots,x_{3t+3})\notag\\
\geq &p_{\mathcal{G}_1}\left(x_{1,2},x_{1,2},x_{1,2},x_{1,2},x_3,\ldots,x_{3t+3}\right)-p_{\mathcal{G}_1}(x_1,x_{1'},x_2,x_{2'},x_3,\ldots,x_{3t+3})\notag\\
= & x_3\left(x_{1,2}^2-\psi(1,1',2,2')\right)+x_4\left(x_{1,2}^2-\psi(1,2,1',2')\right)+ \sum_{i\in[5, 3t+3]}x_i \left(x_{1,2}^2-\psi(1,2',1',2)\right)\notag\\
\geq & \frac{1}{4}\min\left\{x_i : i\in [3, 3t+3]\right\}\left(\sum_{\{i, j\}\in \binom{\{1,1',2,2'\}}{2}}(x_i-x_j)^2\right). 
\end{align}
Set 
\[
\frac{1}{4}\min\{x_i : i\in [3, 3t+3]\}=c_{2}^{1/2}. 
\] 
By \eqref{upper-bound-1-2-1'-2'}, we have $|x_i-x_j|\leq c_2 \epsilon^{1/2}$ for $\{i, j\}\in \binom{\{1, 1', 2, 2'\}}{2}$. Recall that  $|x_1+x_{1'}+x_2+x_{2'}-2a|<2c_1\epsilon^{1/2}$. Thus  
\[
|x_i-a/2|<c_3\epsilon^{1/2}
\]
for $i\in \{1, 1', 2, 2'\}$, where $c_3:=6c_1+c_2$.  This prove (a).

(d). For the proof of \ref{CLAIM:five-prop} (a)--(c), we have that   $||V_i|-\frac{an}{2}|\leq c_3 \epsilon^{1/2} n$ for $i\in \{1,2,1',2'\}$, $||V_i|-an|\leq c_3\epsilon^{1/2} n$ for $i\in [3, 3t]$, and $||V_i|-bn|\leq c_3 \epsilon^{1/2} n$ for $i\in [3t+1, 3t+3]$. Note that if  a 3-graph  $\mathcal{G}$  is a blowup of   $\mathcal{G}_1$ with $|\mathcal{G}|=\lambda(\mathcal{G}_1)v(\mathcal{G})^3$, then $\mathcal{G}$ should be regular.  Recall that $\widehat{\mathcal{G}}_1=\mathcal{G}_1[V_1, V_{1'}, V_2, V_{2'},V_3, \ldots, V_{3t+3}]$ is the associated blowup of $\mathcal{G}_1$. It follows from Claim \ref{CLAIM:five-prop} (a)--(c) that there exists a constant $c_4$ which only depends on $c_3$ such that 
\[
\Delta(\mathcal{H}')\leq \Delta(\widehat{\mathcal{G}}_1)\leq\left(3\lambda(K_{3t+3}^{3-})+c_4\epsilon^{1/2}\right)n^2.
\]

We bound $|V_{2}\setminus N_{\mathcal{H}}(u)|$ for each $u\in V_1$. Let $c_{1,2}=\frac{4c_4}{a}$. If $|V_{2}\setminus N_{\mathcal{H}'}(u)|>c_{1,2}\varepsilon^{1/2}n^2$, then by  Claim \ref{CLAIM:five-prop} (a)--(c), we have 
\begin{align*}
d_{\mathcal{H}'}(u) &\leq \Delta(\widehat{\mathcal{G}}_1)- |V_{2}\setminus N_{\mathcal{H}'}(u)||V_3|\\
& < \left(3\lambda(K_{3t+3}^{3-})+c_4\epsilon^{1/2}\right)n^2-c_{1,2}\epsilon^{1/2}(a-c_3\epsilon^{1/2})n^2\\
& < 3\lambda(K_{3t+3}^{3-})-c_4\epsilon^{1/2}n^2. 
\end{align*}
a contradiction to \eqref{ALIGN:delta(H)}. Thus, $|V_{2}\setminus N_{\mathcal{H}'}(u)|\le c_{1,2}\varepsilon^{1/2}n^2$. Obviously, using the same argument, for $i,j\in \{1',2'\}\cup [3t+3]$ and $i\neq j$, there exists a constant $c_{i,j}$ such that  $|V_{i}\setminus N_{\mathcal{H}}(u)|\leq c_{ij}\epsilon^{1/2} n$ for each  $u\in V(\mathcal{H}')\setminus V_{i}$. 

Set 
$$c^*=\max\{c_{i,j} : i, j\in \{1', 2',\}\cup [3t+3]\,\, \text{and}\,\, i\neq j\}$$ 
and $c=\max\{c_1, c_3, c^*\}$. Then (a)--(d) hold for $c$. This completes the proof of Claim \ref{CLAIM:five-prop}. 
\end{proof}


%

Recall that $\mathcal{H}'$ is $(3t+5)$-partite with the vertex partition $\mathcal{P}$. The following claim shows that $L_{\mathcal{H}}(v)$ is also $(3t+5)$-partite.

\begin{claim}\label{CLAIM:|E-CAP-V_i|<2}
For each $E\in\mathcal{H}$ with $v\in E$, we have $|E\cap V_{i'}|\leq 1$ for $i\in [2]$ and $|E\cap V_i|\leq 1$ for $i\in [3t+3]$.
\end{claim}
\begin{proof}[Proof of Claim \ref{CLAIM:|E-CAP-V_i|<2}]
We only consider the case that $|E\cap V_1|\leq 1$. The other cases are
 similar. By contradiction, suppose that $\{u_1, u'_1\}\subseteq E\cap V_1$. Let $V_i'=N_{\mathcal{H}'}(u_1)\cap  N_{\mathcal{H}'}(u_1')\cap V_{i}$ for $i\in \{1', 2'\}\cup[2, 3t+3]$. By Claim \ref{CLAIM:five-prop} (a)--(d), we have the following holds:
\begin{itemize}
  \item  $|V_{i}'|\geq an/5$ for $i\in \{1', 2, 2'\}$, 
  \item  $|V_i'|\geq an/3$ for $i\in [3, 3t]$, and
  \item  $|V_i'|\geq bn/3$ for $i\in [3t+1, 3t+3]$.
\end{itemize}

Let $W=\{u_1,u_1'\}\cup V_{1'}'\cup V_{2'}'\cup (\cup_{i=2}^{3t+3}V_i')$. We can apply Lemma \ref{LEMMA:greedily-embedding-Gi} to the 3-graph $\mathcal{H}[W]$ with $S=\{u_1, u'_1\}$, $T=\{1', 2'\}\cup [2, 3t+3]$ and $\eta=c\epsilon^{1/4}$. There exists  $u_j\in V'_j$ for $j\in \{1', 2'\}\cup[2, 3t+3]$ such that both $\mathcal{H}[U\cup \{u_1\}]$ and $\mathcal{H}[U\cup \{u'_1\}]$ are copies of $\mathcal{G}_1$, where $U=\{u_j : j\in \{1', 2'\}\cup[2, 3t+3]\}$. This implies that $U\cup \{u_1, u_1'\}$ is a  2-covered set of $\mathcal{H}[U\cup \{u_1,u'_1, v\}]$. Recall that $\{ u_1,u'_1,v\}\in \mathcal{H}$.  Thus, $\mathcal{H}[U\cup \{u_1,u'_1, v\}]\in \mathcal{K}_{3t+6}^3$, a contradiction to the fact  $\mathcal{K}_{3t+6}^3\subseteq \mathcal{M}_t$.
\end{proof}

We give a lower bound on $L_{\mathcal{G}_1}(v)$ with respect to $a,b$. 
Let
\[
\mathcal{U}=\{U_1, U_{1'}, U_2, U_{2'},U_3, \ldots, U_{3t+3}\}
\]
be a partition of $V(\mathcal{H})$ with $|U_i|=a/2$ for $i\in \{1,2,1',2'\}$, $|U_i|=a$ for $3\le i\le 3t$ and $|U_i|=b$ for $3t+1\le i\le 3t+3$. Let  $\widetilde{\mathcal{G}}_1=\mathcal{G}_1[U_1, U_{1'}, U_2, U_{2'},U_3, \ldots, U_{3t+3}]$ to be the associated blowup of $\mathcal{G}_1$. Then $|\widetilde{\mathcal{G}}_1|=\lambda(\mathcal{G}_1)n^3$. As mentioned in \cite{LMR2}, $\widetilde{\mathcal{G}}_1$ should be almost regular in the sense that $|d(x)-d(y)|=o(n^2)$ for any $x,y\in V(\widetilde{\mathcal{G}}_1)$. For $u\in U_1$, 
\begin{align*}
    |L_{\mathcal{G}_0}(u)|=& \sum_{\{i, j\}\in \binom{[2,3t+2]}{2}}|U_i||U_j|+\sum_{\{i, j\}\in \binom{[3t+1,3t+3]}{2}}|U_i||U_j|\\
    &+(|U_{1'}|+|U_{2}|)|U_3|+(|U_2|+|U_{2'}|)|U_4|+(|U_{1'}|+|U_2|)\left(\sum_{i=5}^{3t+3}|U_i|\right)\\
    =& \binom{3t-2}{2}a^2n^2+3b^2n^2+3(3t-2)abn^2+(3t-2)a^2n^2+3abn^2\\
    =&\left(\binom{3t-1}{2}a^2+3b^2+3(3t-1)ab\right)n^2.
\end{align*}
Thus, 
\[
3\lambda(K_{3t+3}^{3-})=\binom{3t-1}{2}a^2+3b^2+3(3t-1)ab. 
\]
By \eqref{ALIGN:delta(H)}, we have 
\begin{align}\label{lower-bound-link-v-mini}
L_{\mathcal{H}}(v)\ge 3\lambda(K_{3t+3}^{3-})=\left(\binom{3t-1}{2}a^2+3b^2+3(3t-1)ab-\epsilon\right)n^2. 
\end{align}

Then, we consider the neighbors of $v$ in $\mathcal{H}$. For simplicity, let $V_1^*=V_1\cup V_{1'}$, $V_2^*=V_2\cup V_{2'}$ and $V_i^*=V_i$ for $i\in [3, 3t+3]$.  Set
\[
V'_i=N_{\mathcal{H}}(v)\cap V^*_i. 
\]
for $i\in [3t+3]$.

We claim that there are no two distinct index $i, j\in [3t+3]$ such that both $\max\{|V'_i|, |V'_j|\}\leq |V_i^*|/5$. Indeed, suppose to the contrary that there are two sets $V_i'$ and $V_j'$ having size at most $|V_i^*|/5$ and $|V_j^*|/5$, respectively, say $i=1$ and $j=2$. Let 
\[
G=K_{3t+5}[V'_1\cap V_1, V'_1\cap V_{1'},V'_2\cap V_2,V'_2\cap V_{2'},V'_3,\ldots V'_{3t+3}]
\] 
be the associated blowup of $K_{3t+5}$. Then $ |L_{\mathcal{H}}(v)|\le |G|$. Now we count the number of edges in $G$. By Claim \ref{CLAIM:five-prop}, $|V'_i|\le (a+ 2c \epsilon)n/5$ for $i\in\{1,2\}$.  Clearly, $|V'_i|\le |V_i|$. So, using Claim \ref{CLAIM:five-prop} again, we have 
\begin{align*}
    |L_{\mathcal{H}}(v)|
    \leq& \binom{3t-2}{2}a^2n^2 +\binom{3}{2}b^2n^2 +3(3t-2)abn^2 + \left(\frac{an}{5}\right)^2+2\left(\frac{an}{10}\right)^2\notag\\
    +& \left(\frac{2an}{5}\right)\left((3t-2)an+3bn \right)+C \epsilon^{1/4}n^2\notag\\
    =&\left( \binom{3t-2}{2}a^2+3b^2n^2+3(3t-2)ab+\frac{(60t-37)a^2}{50}+\frac{6ab}{5}+C \epsilon^{1/4}\right)n^2\notag\\
    =&\left(\binom{3t-1}{2}a^2+3b^2+3(3t-1)ab-\frac{90t-63}{50}a^2-\frac{8}{5}ab+C \epsilon^{1/4}\right)n^2, \notag\\
\end{align*}
where $C$  is a constant   depends on $t,a,b,c$. Thus, we get a contradiction to \eqref{lower-bound-link-v-mini}.

For simplicity, define 
\begin{itemize}
\item $I_{small}=\{i\in \{1', 2'\}\cup [3t+3] : |N_{\mathcal{H}}(v)\cap V_i|\leq \frac{|V_i|}{10}\}$, and
\item $I_{big}=(\{1', 2'\}\cup [3t+3] ) \setminus I_{small}.$
\end{itemize}
By the above arguments, we know that  $|I_{small}|\le 3$.  In the following, we show that there exists some $i\in I_{small}$ and $u\in V_i$ such that $L_{\mathcal{H}}(v)\subseteq L_{\widehat{\mathcal{G}}_1}(u)$. This implies that $\mathcal{H}$ is also $\mathcal{G}_1$-colorable, which completes the proof.

First, considering the codegree of pairs in $\mathcal{G}_1$,  we have the following observation.
\begin{observation}\label{OBSERVATION:D_ij}
\[
d_{\mathcal{G}_1}(i,j)=
\begin{cases}
  2, & \text{if } \,\, \{i, j\}\in \{\{1,2'\},\{1',2\}\},\\
  3t, & \text{if } \,\,\{i, j\}\in \{1, 2'\}\times \{1', 2\}, \\
  3t+2, & \text{if }\,\, \{i, j\}\in \{1,1',2,2'\}\times [3, 3t+3], \\
  3t+2, & \text{if }\,\, \{i, j\}\in\binom{[3t+1, 3t+3]}{2},  \text{ and } \\
  3t+3, & \text{if }\,\, \{i, j\}\in\binom{[3, 3t+3]}{2}-\binom{[3t+1, 3t+3]}{2}.
\end{cases}
\]
\end{observation}

Then,  we construct an auxiliary graph $R$ on the vertex set $\{1', 2'\}\cup [3t+3]$ in which  $i, j$ are adjacent if and only if there exists $e\in L_{\mathcal{H}}(v)$ such that $|e\cap V_i|=|e\cap V_j|=1$. For each edge $ij\in R$, we choice an edge  $e_{ij} \in L_{\mathcal{H}}(v)$  with one endpoint in $V_i$ and the other endpoint in $V_j$. Let $U_1 =\cup_{ij \in R}e_{ij}$. Then  $|U_1| \leq \binom{3t+5}{2}$.
Let 
\[
V'_i=
\begin{cases}
  V_i\cap N_{\mathcal{H}}(v)\cap \left(\cap_{u\in U_1\setminus V_i}N_{\mathcal{H}'}(u) \right)  & \mbox{\text{for} } i\in I_{big},\\
  V_i\cap \left(\cap_{u\in U_1\setminus V_i}N_{\mathcal{H}'}(u) \right)  & \mbox{\text{otherwise}}.
\end{cases}
\]
It follows from Claim \ref{CLAIM:five-prop} that 
\[
|V_i'|\geq \min\left\{\frac{a}{20}n, \frac{b}{10}n\right\}-\frac{1}{10}c\epsilon^{1/2}n-|U_1|c\epsilon^{1/2}n\ge \frac{b}{30}n.
\]
Applying Lemma \ref{LEMMA:greedily-embedding-Gi} to $\mathcal{H}'[\cup_{i\in \{1', 2'\} \cup[3t+3]}V_i']$ with $S=U_1$, $T=\{1', 2'\}\cup [3t+3]$ and $\eta=c\epsilon^{1/2}$. There exists $u_j\in V_j'$ for $\{1', 2'\}\cup [3t+3]$ such that the following statements hold by letting  $U_2=\{u_i : i\in \{1', 2'\}\cup [3t+3]\}$. 
\begin{itemize}
  \item $\mathcal{H}'[U_2]\cong \mathcal{G}_1$, and
  \item $L_{\mathcal{H}'}(u)[U_2]=L_{\widehat{\mathcal{G}}_1}(u_i)[U_2]$ for all $i\in \{1' ,2'\}\cup [3t+3]$ and $u\in U_1\cap V_i$.
\end{itemize}

Finally, let $F=\mathcal{H}[\{v\}\cup U_1\cup U_2]$. Then 
\[
|V(F)|\leq 1+|U_1|+|U_2|\leq 1+\binom{3t+5}{2}+3t+5<(3t+6)^2. 
\]
By the construction of $\mathcal{M}_t$, we have $F\notin \mathcal{M}_t$. Thus, there exists a homomorphism $\varphi: V(F)\rightarrow V(\mathcal{G}_1)$ from $F$ to $\mathcal{G}_1$. Note that $F[U_2]\cong \mathcal{G}_1$. Considering the codegree of pairs in $F[U_2]$ and by Observation \ref{OBSERVATION:D_ij}, we have $\varphi |_{U_2}$ is bijective, and 
\begin{itemize}
  \item $\varphi(\{u_1, u_{1'}. u_2, u_{2'}\})=\{1, 1', 2, 2' \}$,
  \item $\varphi(\{u_i : i\in [3, 3t+3]\})= [3, 3t+3]$, and
  \item $\varphi(\{u_i : i\in [3+1, 3t+3]\})= [3t+1, 3t+3]$.
  \end{itemize}
By symmetry we may suppose that $\varphi(u_i)=i$ for $i\in \{1', 2'\}\cup [3t+3] $. For $u\in V_i\cap U_1$ we have $\varphi(u)=i$ by $L_{\mathcal{H}'}(u)[U_2]=L_{\widehat{\mathcal{G}}_1}(u_i)[U_2]$. Since $v$ is adjacent to all vertices in $\{u_i : i\in I_{big}\}$  and $\varphi$ is a homomorphism, we have $\varphi(v)\in I_{small}$. This means that there exists some $i_0\in I_{small}$ such that $\varphi(R)\subseteq L_{\mathcal{G}_1}(i_0)$, i.e., $L_{\mathcal{H}}(v)\subseteq L_{\widehat{\mathcal{G}}_1}(w)$ for $w\in V_{i_0}$. This completes the proof of Theorem \ref{THM:main-sec1.1} (b). 
\end{proof}

To end this section we prove Theorem \ref{THM:main-sec1.1} (c) using (b).

\begin{proof}[Proof of Theorem \ref{THM:main-sec1.1} (c)]
Recall that for $i\in [t]$, $\mathcal{G}_n^i$ is a 3-graph on $n$ vertices which is a blowup of $\mathcal{G}_i$ with the maximum number of edges. We need to  add or remove $\Omega(n^3)$ edges to transfer $\mathcal{G}_n^i$  to $\mathcal{G}_n^j$ for $1\le i<j\le t$. This together with each $\mathcal{G}_i$ is $\mathcal{M}_t$-free yields that $\xi(\mathcal{M}_t)\ge t$.

For the other side, for sufficiently small $\epsilon$ and large $n$, suppose that  $\mathcal{H}$ is an $\mathcal{M}_t$-free $r$-graph on $n$ vertices with $|\mathcal{H}| > (1-\epsilon) \mathrm{ex}(n,\mathcal{M}_t)$. Let 
    \begin{align*}
        Z_{\varepsilon}(\mathcal{H}) := \left\{v\in V(\mathcal{H}) \colon d_{\mathcal{H}}(v) \le \left(\pi(F)-2\varepsilon^{1/2}\right)\binom{n-1}{r-1}\right\}. 
    \end{align*}
Using  similar arguments in~{\cite[Lemma~4.2]{LMR1}} with some minor modifications, we have that 
    \begin{enumerate}[label=(\alph*)]
    \item
    the set $Z_{\varepsilon}(\mathcal{H})$ has size at most $\varepsilon^{1/2}n$, and
    \item
    the induced subgraph $\mathcal{H}'$ of $\mathcal{H}$ on $V(\mathcal{H})\setminus Z_{\varepsilon}(\mathcal{H})$ satisfies $\delta(\mathcal{H}') \ge \left(\pi(F)-3\varepsilon^{1/2}\right)\binom{n-1}{r-1}$.
    \end{enumerate}
Applying Theorem \ref{THM:main-sec1.1} (b) to $\mathcal{H}'$  yields that $\xi(\mathcal{M}_t)\le t$. 
\end{proof}

\section{Concluding remarks}\label{SEC:Remarks}

In this paper,   we construct a  finite family  $\mathcal{M}_t$ of 3-graphs such that the Tur\'{a}n density of $\mathcal{M}_t$ is  irrational,  and there are $t$ near-extremal $\mathcal{M}_t$-free configurations that are far from each other in edit-distance.  This is the first not stable example that has an irrational  Tur\'{a}n density. Unlike the construction given  in~\cite{LMR1} with  the help of algebraic methods,  we define an operation on 3-graphs. It is more intuitive and easier to understand.

The construction of  $\mathcal{M}_t$ also provides a new phenomenon about feasible region functions. Let $\mathcal{H}$ be an $r$-graph. Recall that  the shadow of $\mathcal{H}$ is 
\begin{align}
\partial\mathcal{H}
=
\left\{A\in \binom{V(\mathcal{H})}{r-1}\colon \text{there is } B\in \mathcal{H} \text{ such that }
	A\subseteq B\right\}. \notag
\end{align}
The  classical Kruskal--Katona theorem~\cite{KA68,KR63} gives a tight lower bound on $|\partial\mathcal{H}|$ with respective to $|\mathcal{H}|$. Liu and Mubayi ~\cite{LM1} extended it and defined the feasible region $\Omega(\mathcal{F})$  and feasible region function $g(\mathcal{F})$ of a family $\mathcal{F}$ of $r$-graphs\footnote{We omit their formal definitions and refer the reader to \cite{LM1} for  detail.}. The global maxima of $g(\mathcal{F})$ is exactly the Tur\'{a}n density of $\mathcal{F}$, and the stability number of $\mathcal{F}$ is closely related to  the number of global maxima of $g(\mathcal{F})$ (e.g. see~\cite{LMR1}). So, $g(\mathcal{F})$ can understand the extremal properties of $\mathcal{F}$-free hypergraphs beyond just the determination of $\pi(\mathcal{F})$. In previous results in \cite{hou2022hypergraphs, LM22, LMR1,LP22}, a family $\mathcal{F}$ with stability number $t$ (or infinity) has $t$ (or infinity) global maxima of $g(\mathcal{F})$. However,  the  shadow of the 3-graph $\mathcal{G}_n^i$ used in our proof is complete $(3t+5)$-partite graph for each $i\in [n]$ and all  shadows of $\mathcal{G}_n^i$ have  same edge densities. This means  $g(\mathcal{M}_t)$ has exactly one  global maxima. This new phenomenon shows that it does exist a not stable finite family of $r$-graphs with one  global maxima of its feasible region function.  This is perhaps helpful to understand the Tur\'{a}n's Tetrahedron conjecture as the shadow of all  constructions given by  Kostochka~\cite{KO82} is complete.

\bibliographystyle{abbrv}
\bibliography{t_stable}

\begin{thebibliography}{10}

\bibitem{RJ11}
R.~Baber and J.~Talbot.
\newblock New {T}ur\'{a}n densities for 3-graphs.
\newblock {\em Electron. J. Combin.}, 19(2):Paper 22, 21, 2012.

\bibitem{BS84}
W.~G. Brown and M.~Simonovits.
\newblock Digraph extremal problems, hypergraph extremal problems, and the
  densities of graph structures.
\newblock {\em Discrete Math.}, 48(2-3):147--162, 1984.

\bibitem{FG20}
F.~Chung and R.~Graham.
\newblock {\em Erd\H{o}s on graphs}.
\newblock A K Peters, Ltd., Wellesley, MA, 1998.
\newblock His legacy of unsolved problems.

\bibitem{Erdos64}
P.~Erd\H{o}s.
\newblock On extremal problems of graphs and generalized graphs.
\newblock {\em Israel J. Math.}, 2:183--190, 1964.

\bibitem{ES66}
P.~Erd\H{o}s and M.~Simonovits.
\newblock A limit theorem in graph theory.
\newblock {\em Studia Sci. Math. Hungar.}, 1:51--57, 1966.

\bibitem{ES46}
P.~Erd\"{o}s and A.~H. Stone.
\newblock On the structure of linear graphs.
\newblock {\em Bull. Amer. Math. Soc.}, 52:1087--1091, 1946.

\bibitem{FF84}
P.~Frankl and Z.~F\"{u}redi.
\newblock An exact result for {$3$}-graphs.
\newblock {\em Discrete Math.}, 50(2-3):323--328, 1984.

\bibitem{FF89}
P.~Frankl and Z.~F\"{u}redi.
\newblock Extremal problems whose solutions are the blowups of the small
  {W}itt-designs.
\newblock {\em J. Combin. Theory Ser. A}, 52(1):129--147, 1989.

\bibitem{hou2022hypergraphs}
J.~Hou, H.~Li, X.~Liu, D.~Mubayi, and Y.~Zhang.
\newblock Hypergraphs with infinitely many extremal constructions.
\newblock {\em arXiv preprint arXiv:2206.03948}, 2022.

\bibitem{KA68}
G.~Katona.
\newblock A theorem of finite sets.
\newblock In {\em Theory of {G}raphs ({P}roc. {C}olloq., {T}ihany, 1966)},
  pages 187--207. Academic Press, New York-London, 1968.

\bibitem{KE11}
P.~Keevash.
\newblock Hypergraph {T}ur\'{a}n problems.
\newblock In {\em Surveys in combinatorics 2011}, volume 392 of {\em London
  Math. Soc. Lecture Note Ser.}, pages 83--139. Cambridge Univ. Press,
  Cambridge, 2011.

\bibitem{KO82}
A.~V. Kostochka.
\newblock A class of constructions for {T}ur\'{a}n's {$(3,\,4)$}-problem.
\newblock {\em Combinatorica}, 2(2):187--192, 1982.

\bibitem{KR63}
J.~B. Kruskal.
\newblock The number of simplices in a complex.
\newblock In {\em Mathematical optimization techniques}, pages 251--278. Univ.
  California Press, Berkeley-Los Angeles, Calif., 1963.

\bibitem{Peng19}
S.~Liu and Y.~Peng.
\newblock Generating non-jumping numbers of hypergraphs.
\newblock {\em Bull. Korean Math. Soc.}, 56(4):1027--1039, 2019.

\bibitem{LM1}
X.~Liu and D.~Mubayi.
\newblock The feasible region of hypergraphs.
\newblock {\em J. Combin. Theory Ser. B}, 148:23--59, 2021.

\bibitem{LM22}
X.~Liu and D.~Mubayi.
\newblock A hypergraph {T}ur\'{a}n problem with no stability.
\newblock {\em Combinatorica}, 42(3):433--462, 2022.

\bibitem{LMR1}
X.~Liu, D.~Mubayi, and C.~Reiher.
\newblock Hypergraphs with many extremal configurations.
\newblock {\em arXiv preprint arXiv:2102.02103}, 2021.

\bibitem{LMR2}
X.~Liu, D.~Mubayi, and C.~Reiher.
\newblock A unified approach to hypergraph stability.
\newblock {\em J. Combin. Theory Ser. B}, 158(part 2):36--62, 2023.

\bibitem{LP22}
X.~Liu and O.~Pikhurko.
\newblock Finite hypergraph families with rich extremal tur\'{a}n constructions
  via mixing patterns.
\newblock {\em arXiv preprint arXiv:2212.08636}, 2022.

\bibitem{MU07}
D.~Mubayi.
\newblock Structure and stability of triangle-free set systems.
\newblock {\em Trans. Amer. Math. Soc.}, 359(1):275--291, 2007.

\bibitem{Peng07}
Y.~Peng.
\newblock Non-jumping numbers for 4-uniform hypergraphs.
\newblock {\em Graphs Combin.}, 23(1):97--110, 2007.

\bibitem{Peng07II}
Y.~Peng.
\newblock Using {L}agrangians of hypergraphs to find non-jumping numbers. {II}.
\newblock {\em Discrete Math.}, 307(14):1754--1766, 2007.

\bibitem{Peng08}
Y.~Peng.
\newblock A note on non-jumping numbers.
\newblock {\em Australas. J. Combin.}, 41:3--13, 2008.

\bibitem{Peng08I}
Y.~Peng.
\newblock Using {L}agrangians of hypergraphs to find non-jumping numbers. {I}.
\newblock {\em Ann. Comb.}, 12(3):307--324, 2008.

\bibitem{PI14}
O.~Pikhurko.
\newblock On possible {T}ur\'{a}n densities.
\newblock {\em Israel J. Math.}, 201(1):415--454, 2014.

\bibitem{RA10}
A.~A. Razborov.
\newblock On 3-hypergraphs with forbidden 4-vertex configurations.
\newblock {\em SIAM J. Discrete Math.}, 24(3):946--963, 2010.

\bibitem{SI68}
M.~Simonovits.
\newblock A method for solving extremal problems in graph theory, stability
  problems.
\newblock In {\em Theory of {G}raphs ({P}roc. {C}olloq., {T}ihany, 1966)},
  pages 279--319. Academic Press, New York-London, 1968.

\bibitem{TU41}
P.~Tur\'{a}n.
\newblock On an external problem in graph theory.
\newblock {\em Mat. Fiz. Lapok}, 48:436--452, 1941.

\bibitem{YP22}
Z.~Yan and Y.~Peng.
\newblock An irrational {L}agrangian density of a single hypergraph.
\newblock {\em SIAM J. Discrete Math.}, 36(1):786--822, 2022.

\bibitem{Peng23}
Z.~Yan and Y.~Peng.
\newblock Non-jumping {T}ur\'{a}n densities of hypergraphs.
\newblock {\em Discrete Math.}, 346(1):Paper No. 113195, 11, 2023.

\bibitem{Zy}
A.~A. Zykov.
\newblock On some properties of linear complexes.
\newblock {\em Mat. Sbornik N.S.}, 24(66):163--188, 1949.

\end{thebibliography}
\end{document}